\theoremstyle{plain}
   \newtheorem{theorem}{Theorem}[section]
   \newtheorem{lemma}[theorem]{Lemma}
   \newtheorem{conjecture}[theorem]{Conjecture}
   \newtheorem{question}{Question}
\theoremstyle{definition}
   \newtheorem{example}{Example}[section]
\theoremstyle{remark}
   \newtheorem{remark}[theorem]{Remark}
\numberwithin{equation}{section}
\def\kk{\kern.2ex\mbox{\raise.5ex\hbox{{\rule{.35em}{.12ex}}}}\kern.2ex}
\newcommand{\LP}{\mathscr{L{\kk}P}}
\newcommand{\NN}{\mathbb{N}}
\newcommand{\zz}{\mathbf{z}}
\newcommand{\xx}{\mathbf{x}}
\newcommand{\A}{\mathcal{A}}
\newcommand{\EE}{\mathcal{E}}
\newcommand{\LL}{\mathcal{L}}
\newcommand{\RR}{\mathbb{R}}
\newcommand{\CC}{\mathbb{C}}
\newcommand{\PPP}{\mathscr{P}}
\renewcommand{\Im}{{\rm Im}}
\renewcommand{\Re}{{\rm Re}}
\newcommand{\PF}{{\rm PF}}
\title[Infinite log-concavity for polynomial PF]{Infinite log-concavity for polynomial \\ P\'olya frequency sequences} 
\author{Petter Br\"and\'en}
\thanks{The first author is a Royal Swedish Academy of Sciences Research Fellow
  supported by a grant from the Knut and Alice Wallenberg
  Foundation. The research is also supported by the G\"oran Gustafsson Foundation.}
\author{Matthew Chasse}
\address{Department of Mathematics, Royal Institute of Technology, SE-100 44 Stockholm,
Sweden}
\email{pbranden@kth.se, chasse@kth.se}
\begin{document}
\begin{abstract}
McNamara and Sagan conjectured that if $a_0,a_1, a_2, \ldots$ is a P\'olya frequency (PF) sequence, then so is 
$a_0^2, a_1^2 -a_0a_2, a_2^2-a_1a_3, \ldots$. We prove this conjecture for a natural class of PF-sequences which are interpolated by polynomials. In particular, this proves that the columns of  Pascal's triangle are infinitely log-concave, as conjectured by McNamara and Sagan. 
We also give counterexamples to the first mentioned conjecture. 

Our methods provide families of nonlinear operators that preserve the property of having only real and non-positive zeros.
\end{abstract}
\maketitle

\renewcommand\theenumi{\roman{enumi}}
\thispagestyle{empty}
\section{Introduction}
Consider the operator $\LL$  defined on sequences of real numbers by
$$
\LL(A)= \{a_0^2, a_1^2 -a_0a_2, a_2^2-a_1a_3, a_3^2-a_2a_4, \ldots\}, 
$$
where  $A=\{a_0,a_1,a_2, a_3,\ldots\}$. 
Hence a sequence $A$ is log--concave if and only if $\LL(A)$ is a nonnegative sequence. A sequence $A$ is said to be $k$-\emph{fold log-concave} if $\LL^j(A)$ is a nonnegative sequence for all $0\leq j \leq k$, and \emph{infinitely log-concave} if it is $k$-fold log-concave for all $k \geq 0$.  The work of Boros and Moll \cite[p.~157]{BM} spurred the interest in infinite log-concavity in the combinatorics community, although more than a decade earlier similar notions were introduced by Craven and Csordas \cite{CC1,CC2} in the theory of entire functions.  Boros and Moll showed that the sequences $\{d_\ell(m)\}_{\ell=0}^m$, $m\in\NN$, are unimodal, where
\[
d_\ell(m) = 2^{-2m} \sum_{k=\ell}^m 2^k \binom{2m-2k}{m-k} \binom{m+k}{m} \binom{k}{\ell},
\]
and conjectured furthermore that $\{d_\ell(m)\}_{\ell=0}^m$ is infinitely log-concave for each $m\in\NN$.  Chen \emph{et.~al.} \cite{Chen} proved $3$-fold log-concavity of $\{d_\ell(m)\}_{\ell=0}^m$ by proving a related conjecture of the first author \cite{B} which implies $3$-fold log-concavity of $\{d_\ell(m)\}_{\ell=0}^m$ for each $m \in \mathbb{N}$ by the work of Craven and Csordas \cite{CC2}. Concurrent with their conjecture for $\{d_\ell(m)\}_{\ell=0}^m$, Boros and Moll suggested proving the infinite log-concavity of the more fundamental sequence of binomial numbers, $\{\binom{n}{k}\}_{k\ge0}$, for any $n\in\NN$ (established by the first author in \cite{B}).  The following additional conjectures for the columns and diagonals of Pascal's triangle, the first of which has remained open, were made McNamara and Sagan.

\begin{conjecture}[{\cite[Conjecture 4.1]{MS}}]\label{binom-column}
The sequence $\{\binom{n+k}{k}\}_{n\ge0}$ is infinitely log-concave for all fixed $k\ge 0.$ 
\end{conjecture}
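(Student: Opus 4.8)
The plan is to track $\LL$ at the level of the interpolating polynomial. Write $\binom{n+k}{k}=p_k(n)$, where $p_k(x)=\frac{1}{k!}(x+1)(x+2)\cdots(x+k)$ is a polynomial all of whose zeros are negative integers; thus $\{\binom{n+k}{k}\}_{n\ge 0}$ is a P\'olya frequency sequence that is interpolated by a polynomial having only real, non-positive zeros. The cases $k\le 1$ are immediate, since $\LL(\{1,1,1,\ldots\})=\{1,0,0,\ldots\}$ and $\LL(\{1,2,3,\ldots\})=\{1,1,1,\ldots\}$, while $\{1,0,0,\ldots\}$ is fixed by $\LL$; so assume $k\ge 2$.

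The basic observation is that if $a_n=p(n)$ for a polynomial $p$ with $p(-1)=0$, then $\LL(\{a_n\})=\{q(n)\}_{n\ge 0}$ with $q(x)=p(x)^2-p(x-1)\,p(x+1)$, the identity holding even at $n=0$ precisely because $p(-1)=0$. A direct computation shows moreover that, for $k\ge 2$,
\[
\LL\bigl(\{\tbinom{n+k}{k}\}_{n\ge 0}\bigr)=\frac{1}{k}\,\bigl\{\tbinom{n+k-1}{k-1}\tbinom{n+k}{k-1}\bigr\}_{n\ge 0},
\]
so a single step of $\LL$ carries the $k$-th column of Pascal's triangle to a positive multiple of the termwise product of two consecutive shifts of the $(k-1)$-st column; in particular the $k=2$ column $\{\binom{n+2}{2}\}_{n\ge0}$ is a fixed point of $\LL$. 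Writing $T$ for the nonlinear operator $p\mapsto p(x)^2-p(x-1)p(x+1)$, the plan is to exhibit a class $\mathcal C$ of polynomials with only real non-positive zeros such that (i) $p_k\in\mathcal C$, (ii) every element of $\mathcal C$ vanishes at $-1$, and (iii) $T$ maps $\mathcal C$ into itself up to multiplication by positive scalars. Granting this, an induction using the identity $\LL(\{a_n\})=\{q(n)\}$ at each step gives $\LL^{j}\bigl(\{\binom{n+k}{k}\}\bigr)=\{(T^{j}p_k)(n)\}_{n\ge 0}$ for all $j\ge 0$; and since $T^{j}p_k\in\mathcal C$ has positive leading coefficient and no positive zeros, $(T^{j}p_k)(n)>0$ for every integer $n\ge 0$, which is exactly $j$-fold log-concavity of $\{\binom{n+k}{k}\}$ for all $j$.

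Everything thus reduces to step (iii), a real-rootedness statement for the nonlinear map $T$; this is where the families of nonlinear operators preserving ``only real and non-positive zeros'' announced in the abstract come in. I expect $\mathcal C$ to be (a suitable subfamily of) the polynomials $p$ for which $\{p(n)\}_{n\ge 0}$ is itself a P\'olya frequency sequence --- equivalently, for which the numerator $A_p$ in $\sum_{n\ge 0}p(n)x^n=A_p(x)/(1-x)^{\deg p+1}$ has only real non-positive zeros --- so that one propagates an invariant genuinely stronger than the conclusion being sought. This closure property is the main obstacle, for two reasons. First, the obvious linearization fails: the top-degree part of $T(p)$ coincides with that of the Wronskian-type polynomial $p'(x)^2-p(x)p''(x)$, which for real-rooted $p=c\prod_i(x-r_i)$ equals $p(x)^2\sum_i(x-r_i)^{-2}$ and hence is nonnegative on all of $\mathbb R$ --- so it is real-rooted only in the degenerate case of a constant times a perfect square, and the lower-order corrections in $T(p)$ must be used in an essential way. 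Second, $T$ does \emph{not} preserve the class of all polynomials with only real non-positive zeros: for instance $T\bigl((x+1)(x+3)\bigr)=2x^2+8x+9$ has non-real zeros, consistently with the fact that $(x+1)(x+3)\notin\mathcal C$ because $\{(n+1)(n+3)\}_{n\ge 0}$ fails to be a P\'olya frequency sequence. The heart of the matter is therefore to isolate the finer positivity feature actually distinguishing $\mathcal C$ --- interlacing among $p$, $p(x-1)$, $p(x+1)$, stability of an auxiliary bivariate form, or real-rootedness of $A_p$ --- and to prove that $T$ preserves it.
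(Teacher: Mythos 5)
Your setup is correct and matches the architecture of the paper's argument: the identity $\LL\bigl(\{\binom{n+k}{k}\}_{n\ge 0}\bigr)=\frac1k\bigl\{\binom{n+k-1}{k-1}\binom{n+k}{k-1}\bigr\}_{n\ge 0}$ checks out, and the strategy of finding a class $\mathcal C$ of non-positively-rooted polynomials containing $p_k$, closed under $T(p)=p(x)^2-p(x-1)p(x+1)$, and forcing the recursion $\LL^{j}(\{p(n)\})=\{(T^{j}p)(n)\}$ is exactly how the paper proceeds (its classes $\A_0,\A_{-1},\A_{-2}$ in Theorem~\ref{main}). But the proposal stops precisely where the proof begins: your step (iii) \emph{is} the main theorem of the paper, and you explicitly leave it as ``the heart of the matter'' without an argument. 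What the paper actually does there is: (a) invoke Wagner's criterion (Lemma~\ref{wgn}) that $\{p(n)\}_{n\ge0}$ is a P\'olya frequency sequence iff $\EE(p)$ has all zeros in $[-1,0]$, where $\EE\binom xk = x^k$; (b) prove via the diamond product (Lemma~\ref{algg}) that if $p(-1)=p(0)=0$ and $\EE(p)=x(1+x)g$, then $\EE(T(p))=x(1+x)\sum_{k}\bigl((g^{(k)}/k!)^2-(g^{(k-1)}/(k-1)!)(g^{(k+1)}/(k+1)!)\bigr)x^k(1+x)^k$; and (c) show that this Tur\'an-type differential operator preserves $[-1,0]$-rootedness by a Grace--Walsh--Szeg\H{o} argument combined with the identity $\sum_k(e_k^2-e_{k-1}e_{k+1})=e_n\sum_k C_k e_{n-2k}(\xx+1/\xx)$ and the known location of the zeros of the Jacobi polynomials $P_n^{(1,1)}$. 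None of this machinery is present, or even concretely identified, in your writeup, so the core of the proof is missing.

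There is also a structural point you have not resolved: vanishing at $-1$ alone does not propagate, since $T(p)(-1)=-p(-2)p(0)$, so your class $\mathcal C$ must additionally force a zero at $-2$ or at $0$. Your first candidate for $\mathcal C$ (all PF-interpolating polynomials vanishing at $-1$) genuinely fails: the paper's Example~\ref{poly-ce} exhibits $q(x)=(x+1)(x+3)^2$, which interpolates a PF sequence and satisfies $q(-1)=0$, yet $\LL(\{q(n)\})$ is not PF. The correct classes turn out to require two \emph{consecutive} integer zeros --- $p(-2)=p(-1)=0$, $p(-1)=p(0)=0$, or $p(0)=p(1)=0$ --- and $p_k$ lands in the first of these for $k\ge 2$. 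You correctly sensed that a ``finer positivity feature'' is needed, but identifying it and proving its preservation under $T$ is the entire content of the result, so the proposal as it stands does not constitute a proof.
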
  

\begin{conjecture}[{\cite[Conjecture 4.4]{MS}}]\label{binom-diag}
Suppose that $u$ and $v$ are distinct nonnegative integers.  Then $\{\binom{n+mu}{mv}\}_{m\ge0}$ is infinitely log-concave for all $n\ge 0$ if and only if $u<v$ or $v=0$. 
\end{conjecture}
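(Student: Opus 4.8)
The plan is to prove the two directions of the equivalence separately, after disposing of the case $v=0$. When $v=0$ we have $\binom{n+mu}{mv}=\binom{n+mu}{0}=1$ for every $m$, so for every $n$ the sequence is $1,1,1,\dots$; since $\LL(1,1,1,\dots)=(1,0,0,\dots)$ and $\LL(1,0,0,\dots)=(1,0,0,\dots)$, all iterates $\LL^{j}$ are nonnegative and the sequence is infinitely log-concave. Since $u,v$ are distinct nonnegative integers, the remaining possibilities are $u<v$ (which forces $v\ge1$) and $u>v\ge1$; I would handle the direction $u>v\ge1\Rightarrow$ failure by a convexity estimate, and the direction $u<v\Rightarrow$ infinite log-concavity by reduction to the polynomial P\'olya frequency theory developed in this paper.

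For the direction $u>v\ge1$ I claim $\{\binom{n+mu}{mv}\}_{m\ge0}$ fails to be log-concave for \emph{every} $n$. Write $a_{m}=\binom{n+mu}{mv}=\Gamma(n+mu+1)\big/\bigl(\Gamma(mv+1)\,\Gamma(n+(u-v)m+1)\bigr)$; as $u>v\ge1$ and $n\ge0$, the three arguments are positive on $[0,\infty)$ and tend to infinity, so $f(m):=\log a_{m}$ extends to a smooth function on $[0,\infty)$ with
\[
f''(m)=u^{2}\psi^{(1)}(n+mu+1)-v^{2}\psi^{(1)}(mv+1)-(u-v)^{2}\psi^{(1)}\bigl(n+(u-v)m+1\bigr),
\]
$\psi^{(1)}$ denoting the trigamma function. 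Substituting $\psi^{(1)}(t)=t^{-1}+\tfrac12t^{-2}+O(t^{-3})$, the $1/m$ contributions cancel (since $u-v-(u-v)=0$) and the $1/m^{2}$ contributions add up to $+\tfrac1{2m^{2}}$, whence $f''(m)=\tfrac1{2m^{2}}+O(m^{-3})>0$ for all large $m$. So $f$ is eventually strictly convex, giving $a_{m}^{2}<a_{m-1}a_{m+1}$ for all large $m$; thus $\LL(a)$ has negative entries and the sequence is not infinitely log-concave. Taking the contrapositive, infinite log-concavity for all $n$ implies $u<v$ or $v=0$.

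For the direction $u<v$, note that $\binom{n+mu}{mv}=0$ once $mv>n+mu$, i.e.\ once $m>n/(v-u)$, so for each $n$ the sequence is finite, supported on $0\le m\le r:=\lfloor n/(v-u)\rfloor$. I would proceed in two steps. First, show that the generating polynomial
\[
B_{n}(x)=\sum_{m=0}^{r}\binom{n+mu}{mv}x^{m}
\]
has only real nonpositive zeros, so that $\{\binom{n+mu}{mv}\}_{m\ge0}$ is a finite P\'olya frequency sequence. Because the ratio of consecutive terms of $\binom{n+mu}{mv}$ is a rational function of $m$ (with $v$ factors in the numerator and $v$ in the denominator), $B_{n}$ is a terminating hypergeometric polynomial; I would identify its parameters and either invoke a known real-rootedness criterion for such polynomials or prove an interlacing chain among the $B_{n}$ by induction on $n$, using the representation $B_{n}(x)=\tfrac1{2\pi i}\oint(1+z)^{n}z^{v-1}\bigl(z^{v}-x(1+z)^{u}\bigr)^{-1}\,dz$ (small circle about the origin) to control how the zeros of $B_{n}$ move with $n$. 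Second, upgrade P\'olya frequency to infinite log-concavity: since $\LL$ does not preserve P\'olya frequency in general, I would instead apply the nonlinear, real-rootedness-preserving operators of this paper to the orbit $B_{n},\LL(B_{n}),\LL^{2}(B_{n}),\dots$, checking that $B_{n}$ lies in the subclass on which that machinery acts, so that every iterate remains real-rooted — in particular with nonnegative coefficients.

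The main obstacle is the first step of the $u<v$ direction: proving real-rootedness of $B_{n}(x)$ uniformly over all admissible pairs $u<v$ and all $n$. The obvious Pascal-type recurrence for $B_{n}$ turns out to be circular, so one needs either a genuinely analytic argument (tracking the $v$ zeros of $z^{v}=x(1+z)^{u}$ that cluster near the origin) or an identification with a well-understood family of orthogonal or hypergeometric polynomials. A second point to secure is that the paper's infinite-log-concavity result, stated for sequences interpolated by polynomials, actually covers the finite sequences occurring here — equivalently, that $\LL$ preserves the relevant subclass of real-rooted polynomials along each orbit $\{\LL^{j}(B_{n})\}_{j\ge0}$. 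Granting these two points, together with the case $v=0$ and the convexity argument, the stated equivalence follows.
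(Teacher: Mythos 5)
This statement is not proved in the paper at all: the authors simply record that it ``was established by results in \cite{B} and \cite{Yu}'' and move on, so there is no internal proof to match your argument against. Measured on its own terms, your proposal handles the easy parts correctly --- the $v=0$ case is a one-line computation, and your trigamma/convexity estimate for $u>v\ge 1$ is sound (the $1/m$ terms cancel and the surviving $+\tfrac{1}{2m^2}$ forces eventual log-convexity, so even $1$-fold log-concavity fails; this gives the ``only if'' direction) --- but the entire substance of the ``if'' direction is left as an acknowledged gap. The claim that $B_n(x)=\sum_m \binom{n+mu}{mv}x^m$ has only real nonpositive zeros when $u<v$ is precisely the hard content here (it is the theorem of Yu cited as \cite{Yu}), and your sketch offers only candidate strategies (hypergeometric identification, an interlacing induction, a contour integral whose convergence on a small circle about the origin is itself problematic since $|z^v-x(1+z)^u|$ is dominated by the $x(1+z)^u$ term there) without carrying any of them out. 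As written, the proposal is a plan rather than a proof.

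One smaller correction to your second step: once real-rootedness of $B_n$ is granted, you do not need the machinery of this paper (Theorem \ref{main} and the classes $\A_0,\A_{-1},\A_{-2}$), which is built for \emph{infinite} sequences interpolated by polynomials and does not apply to the finitely supported sequences arising when $u<v$. The correct tool is the earlier result of \cite{B} quoted in the introduction: if $A$ is the coefficient sequence of a polynomial (or entire function) with only real non-positive zeros, then so is $\LL(A)$. That theorem closes the orbit $\{\LL^j(B_n)\}_{j\ge 0}$ inside the class of real-rooted polynomials with nonnegative coefficients and immediately yields infinite log-concavity, so your ``second point to secure'' is already secured by the literature --- the genuine missing piece is only the real-rootedness of $B_n$ itself.
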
  
\noindent
Conjecture \ref{binom-diag} was established by results in \cite{B} and \cite{Yu}. 
We settle Conjecture\ref{binom-column} in the affirmative by proving Theorem \ref{main}.   Part of our proof generalizes to provide examples of nonlinear transformations of polynomials which preserve the property that the zeros lie in a prescribed subset of $\CC$.  Nonlinear transformations of polynomials have been the topic of several recent investigations \cite{B,G,Karp}.

A (possibly infinite) matrix is called \emph{totally nonnegative} if all of its minors are nonnegative.  In particular, if the Toeplitz matrix $M=(a_{j-i})_{i,j\ge 0}$ defined by the sequence $\{a_k\}_{k=0}^\infty$ is totally nonnegative, then then the sequence is called totally nonnegative or said to be a {\em P\'olya frequency sequence} ($\PF$-sequence).  Let $\PF$ be the class of all P\'olya frequency sequences, which is characterized by the following theorem.

\begin{theorem}[\cite{ASW,E}]\label{gene}
Let $\{a_k\}_{k=0}^\infty$ be a sequence of real numbers. Then $\{a_k\}_{k=0}^\infty$ is a P\'olya frequency sequence if and only if its power series generating function is analytic in a neighborhood of the origin and has the expansion
\begin{equation}\label{pf-gen}
\sum_{k=0}^\infty a_kx^k = cx^me^{\gamma x} {\prod_{k\ge 0} (1+\alpha_k x) }\Big / { \prod_{k\ge 0} (1-\beta_k x)}
\end{equation}
where $c,\gamma\ge 0$, $\alpha_k,\beta_k>0$, $m\in\NN\cup\{0\}$, and $\sum_{k\ge 0} (\alpha_k+\beta_k)<\infty$.
\end{theorem}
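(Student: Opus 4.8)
The statement is the classical Aissen--Schoenberg--Whitney--Edrei theorem, so a genuine proof ultimately reduces to the cited results; here is the architecture, with an indication of where the difficulty sits. \emph{Sufficiency.} The class $\PF$ is closed under convolution: if $\sum c_kx^k=\bigl(\sum a_kx^k\bigr)\bigl(\sum b_kx^k\bigr)$ then the Toeplitz matrix of $\{c_k\}$ is the matrix product of the Toeplitz matrices of $\{a_k\}$ and $\{b_k\}$ (each entry being a finite sum), so by the Cauchy--Binet formula every minor of it is a sum of products of minors of the two factors, hence nonnegative; and $\PF$ is closed under coefficientwise limits, since each minor depends polynomially on the entries. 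Each elementary building block generates a $\PF$-sequence: $x^m$ merely shifts a sequence and its Toeplitz matrix; $1/(1-\beta x)$ gives the geometric sequence $(1,\beta,\beta^2,\dots)$, whose triangular Toeplitz matrix has nonnegative minors by a direct computation; $1+\alpha x$ gives $(1,\alpha,0,0,\dots)$, whose Toeplitz matrix is bidiagonal with nonnegative entries; and $e^{\gamma x}$ gives $(\gamma^k/k!)_{k\ge0}$, the coefficientwise limit of the sequences generated by $(1+\gamma x/n)^n$. Since $\sum(\alpha_k+\beta_k)<\infty$, the right-hand side of \eqref{pf-gen} is a coefficientwise limit of finite convolutions of such blocks, hence generates a $\PF$-sequence.

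\emph{Necessity, reductions.} Conversely let $\{a_k\}$ be $\PF$. The $1\times1$ minors give $a_k\ge0$; if $a_0=\dots=a_{m-1}=0<a_m$ the shifted sequence $\{a_{k+m}\}$ is again $\PF$, which accounts for the factor $cx^m$, so we may normalize $a_0=1$. The $2\times2$ minors on two consecutive rows and columns give $a_k^2\ge a_{k-1}a_{k+1}$, so $\{a_k\}$ is log-concave, the ratios $a_{k+1}/a_k$ are non-increasing and bounded, $\limsup_k a_k^{1/k}<\infty$, and $f(x)=\sum a_kx^k$ is analytic near the origin --- one of the two assertions of the theorem. Made quantitative through the larger Toeplitz minors, such estimates are what ultimately control the growth of $f$.

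\emph{Necessity, structural core (the hard part).} It remains to show $f$ continues to a meromorphic function of the form \eqref{pf-gen}, which is essentially the whole content. I would argue in two steps. First, the rational case of Aissen--Schoenberg--Whitney: a finite sequence is $\PF$ iff its generating polynomial has only real nonpositive zeros, and a sequence with rational generating function is $\PF$ iff that function equals $cx^m\prod(1+\alpha_ix)/\prod(1-\beta_jx)$ with $\alpha_i,\beta_j>0$; this is obtained by translating the infinite family of minor inequalities into conditions on Toeplitz and Hankel determinants and then, through the classical dictionary relating sign patterns of such determinants to the location of zeros and poles, into the factored form. Second, Edrei's limiting argument: using total nonnegativity of the minors of the truncated Toeplitz matrices one produces rational $\PF$-approximants $f_n\to f$ of the above form with uniformly controlled growth; Hurwitz's theorem then forces the zeros of $f$ to be real nonpositive and its poles real positive, the growth bound forces genus at most one, nonnegativity of $\gamma$ and of the $\alpha_k,\beta_k$ is inherited from the approximants, Hadamard factorization delivers \eqref{pf-gen}, and $\sum(\alpha_k+\beta_k)<\infty$ is precisely the convergence condition for the two genus-zero products. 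The real obstacle is exactly this last step --- the rational case and the limit passage with enough growth control to pin down the exponential factor --- which is Edrei's theorem; I would quote it from \cite{ASW,E}, everything else being routine by comparison.
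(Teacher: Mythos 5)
The paper offers no proof of this statement: it is the classical Aissen--Schoenberg--Whitney--Edrei theorem, quoted directly from \cite{ASW,E}. Your outline is correct and consistent with that treatment --- the sufficiency direction (Cauchy--Binet for convolutions, closure under coefficientwise limits, and the verification for the elementary building blocks) is essentially complete as sketched, and you rightly identify the necessity direction as the substantive content and defer it to the same references the paper itself cites.
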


The first author \cite{B} proved that if the sequence $A=\{a_k\}_{k=0}^\infty$ is the sequence of coefficients of an entire function with only real non-positive zeros, then so is $\LL(A)$ (the case of no poles in \eqref{pf-gen}), resolving a conjecture of Fisk, McNamara-Sagan, and Stanley.  For P\'olya frequency sequences the following conjecture of McNamara-Sagan has remained open.

\begin{conjecture}[{\cite[Conjecture 7.4]{MS}}]\label{McNamara-Sagan}
Let $A=\{a_k\}_{k=0}^\infty$ be a sequence of real numbers. If $A$ is  a  P\'olya frequency sequence, then $\LL(A)$ is also a  P\'olya frequency sequence.
\end{conjecture}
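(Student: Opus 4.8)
As the abstract signals, Conjecture~\ref{McNamara-Sagan} turns out to be \emph{false}, so the plan is to disprove it. The guiding observation is that the available positive results -- the first author's theorem \cite{B} that $\LL$ preserves the coefficient sequences of entire functions with only real nonpositive zeros (``the case of no poles in \eqref{pf-gen}''), together with the polynomial case settled in this paper -- cover exactly the PF-sequences whose generating function \eqref{pf-gen} has \emph{no poles}. Hence any counterexample must exploit a genuine denominator $\prod(1-\beta_k x)$, and I would look first among rational generating functions, where $\LL$ can be analyzed explicitly.

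\textbf{How $\LL$ acts on rational sequences.} The first step is to record that if $\sum_{k\ge0}a_k x^k=\sum_i c_i/(1-\lambda_i x)$ has only simple poles, then writing $a_k=\sum_i c_i\lambda_i^k$ and substituting into $\LL(A)_k=a_k^2-a_{k-1}a_{k+1}$, the diagonal terms cancel and a one-line symmetrization gives
\[
\LL(A)_k=-\sum_{i<j}\frac{c_ic_j(\lambda_i-\lambda_j)^2}{\lambda_i\lambda_j}\,(\lambda_i\lambda_j)^k,
\]
valid for $k\ge 1$, and also at $k=0$ precisely when $\sum_i c_i/\lambda_i=0$, i.e.\ when the numerator of the rational function has degree at most $\deg(\text{denominator})-2$. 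Thus
\[
\sum_{k\ge0}\LL(A)_k x^k=\sum_{i<j}\frac{d_{ij}}{1-\lambda_i\lambda_j x},\qquad d_{ij}=-\frac{c_ic_j(\lambda_i-\lambda_j)^2}{\lambda_i\lambda_j}.
\]
The denominator here, $\prod_{i<j}(1-\lambda_i\lambda_j x)$, is still of the right type when the $\lambda_i$ are positive; the only remaining way to violate the PF property is for the numerator -- a polynomial of degree $\le\binom{n}{2}-1$ whose coefficients are polynomial in the $c_i,\lambda_i$ -- to acquire a zero outside $(-\infty,0]$. That is easy to force, and locating the cleanest instance is the substance of the argument.

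\textbf{An explicit counterexample.} I would take
\[
f(x)=\sum_{k\ge0}a_k x^k=\frac{1+x}{(1-x)(1-2x)(1-3x)},
\]
which is a PF-sequence by Theorem~\ref{gene} (with $c=1$, $m=\gamma=0$, one factor $1+\alpha_1 x$ with $\alpha_1=1$, and poles $\beta_1,\beta_2,\beta_3=1,2,3$). Partial fractions give $a_k=1-6\cdot2^k+6\cdot3^k$, for which $\sum_i c_i/\lambda_i=1-3+2=0$, so the formula above holds for all $k\ge0$ and
\[
\sum_{k\ge0}\LL(A)_k x^k=\frac{3}{1-2x}-\frac{8}{1-3x}+\frac{6}{1-6x}=\frac{1+7x-6x^2}{(1-2x)(1-3x)(1-6x)}.
\]
The quadratic $1+7x-6x^2$ has the positive root $x=(7+\sqrt{73})/12$, distinct from $\tfrac12,\tfrac13,\tfrac16$, so the fraction is in lowest terms and is not of the form \eqref{pf-gen}; by Theorem~\ref{gene}, $\LL(A)\notin\PF$, disproving Conjecture~\ref{McNamara-Sagan}. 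Here $\LL(A)=\{1,18,156,1104,7176,\dots\}$, which is still log-concave, so the failure is a higher-order one; if desired one can instead exhibit an explicit negative higher minor of its Toeplitz matrix.

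\textbf{Where the difficulty lies.} There is no real computational obstacle; the nonobvious part is conceptual -- recognizing that the conjecture is false and pinpointing the mechanism. The poles combine harmlessly in pairs $\lambda_i\lambda_j$, but the zero $1+x$ rescales the residues $c_i$ just enough to make the numerator change sign. This also explains why the positive theorems survive: with no poles there is no such pairing, and a counterexample genuinely needs at least three distinct $\lambda_i$, since with only two ``eigenvalues'' $\LL(A)$ is essentially geometric and hence automatically a PF-sequence.
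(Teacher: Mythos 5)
Your disproof is correct: $(1+x)/((1-x)(1-2x)(1-3x))$ generates a $\PF$-sequence by Theorem \ref{gene}, your partial-fraction formula for the action of $\LL$ on simple-pole rational sequences checks out (including the $k=0$ boundary term, since $\sum_i c_i/\lambda_i=1-3+2=0$), and the resulting generating function $(1+7x-6x^2)/((1-2x)(1-3x)(1-6x))$ is in lowest terms with a zero at $(7+\sqrt{73})/12>0$, which is incompatible with the form \eqref{pf-gen}, so $\LL(A)\notin\PF$. The paper refutes the conjecture in essentially the same way --- an explicit rational counterexample tested against Theorem \ref{gene} --- merely naming $(1+x)^3/(1-10x)$ in passing (with Example \ref{not-inf-lc} giving a stronger failure of $3$-fold log-concavity), so your argument matches the paper's approach, just with a different example and a more systematic derivation.
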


Simple examples show that Conjecture \ref{McNamara-Sagan} is false in general (consider the sequence generated by $(1+x)^3/(1-10x)$).  Furthermore, the following example shows that some $\PF$-sequence are not even $3$-fold log-concave.  

\begin{example}\label{not-inf-lc}
Let $A=\{a_k\}_{k=0}^\infty=\{1,8,35,116,332,\dots\}$ be the sequence of real numbers with the rational generating function
\[
\frac{(1+x)^3}{(1-x)(1-2x)^2} = 1 + 8x + 116x^2 + 332x^3 + \dots.
\]
Then,
\begin{align*}
\LL^4(A)=\{1, &67251334144, 681452113625701425, 30700964335097660866560,\\
 &-41699291012783844888674304, \dots \},
\end{align*}
so $\LL^3(A)$ is not log-concave.
\end{example}

Wagner \cite{W} has shown that $\PF$--sequences which are interpolated by polynomials are closed under the Hadamard product, while arbitrary $\PF$--sequences are not.  This motivates us to look for a subclass of the $\PF$-sequences which is closed under $\LL$.  Such a subclass cannot include all polynomial $\PF$-sequences, as we observe in Example \ref{poly-ce}. 
 We show in Section \ref{main-results} that Conjecture \ref{McNamara-Sagan} holds for several large classes of $\PF$--sequences which are interpolated by polynomials.  These classes are natural in a sense, being indicated by Wagner's results for the Hadamard product with an additional requirement (cf. Theorem \ref{main}).  In Section \ref{extensions}, we generalize the proof in Section \ref{main-results} to provide some new nonlinear transformations which have prescribed effects on the zero loci of polynomials. 

\section{Main results}\label{main-results}
We are interested in $\PF$--sequences that are interpolated by polynomials, i.e., sequences $A= \{a_k\}_{k=0}^\infty \in \PF$ where $a_k=p(k)$ for all $k \in \NN$, for some  $p \in \RR[x]$. In terms of generating functions this is equivalent to (by Theorem \ref{gene}) that the generating function of $A$ is of the form 
$$
\sum_{k=0}^\infty a_k x^k = \frac {w(x)}{(1-x)^d},
$$
where $w(x)$ is a real--rooted polynomial with nonnegative coefficients. 
However we will also require that $\LL^k(A)$ is interpolated by polynomials for all $k \in \NN$. We shall see that the following  are natural such classes 
\begin{align*}
\A_{0} &= \big\{ \{p(k)\}_{k=0}^\infty \in \PF: p \in \RR[x] \mbox{ and } p(0)=p(1) =0  \big\}, \\
\A_{-1} &= \big\{ \{p(k)\}_{k=0}^\infty \in \PF: p \in \RR[x] \mbox{ and } p(-1)=p(0) =0  \big\},   \mbox{ and }\\
\A_{-2} &= \big\{ \{p(k)\}_{k=0}^\infty \in \PF: p \in \RR[x] \mbox{ and } p(-2)=p(-1) =0  \big\}. 
\end{align*}

Our main result is the following.

\begin{theorem}\label{main}
Let $A=\{a_j\}_{j=0}^\infty$ be a sequence of real numbers. The following conditions on $A$ are equivalent. 
\begin{enumerate}
\item For each $k \in \NN$, $\LL^k(A)$ is a P\'olya frequency sequence which is interpolated by a polynomial. 
\item $A$, $\LL(A)$  and $\LL^2(A)$ are interpolated by polynomials, and $A$ as well as $\LL(A)$ are P\'olya frequency sequences. 
\item $A \in  \A_{0}\cup \A_{-1} \cup \A_{-2}$.
\end{enumerate}
Moreover if $A \in \A_i$ where  $i \in \{-2,-1,0\}$, then $\LL(A) \in  \A_i$. 
\end{theorem}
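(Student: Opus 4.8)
The plan is to prove the equivalence by a cycle of implications $(iii)\Rightarrow(i)\Rightarrow(ii)\Rightarrow(iii)$, with the closure statement ``$A\in\A_i\Rightarrow\LL(A)\in\A_i$'' carrying most of the weight. I would begin with the closure statement, since $(iii)\Rightarrow(i)$ is an immediate consequence of it together with the fact (from Theorem~\ref{gene}, in the polynomial case) that membership in $\A_i$ already implies the $\PF$ property. So fix $i\in\{-2,-1,0\}$ and suppose $A=\{p(k)\}_{k\ge0}\in\A_i$. The key observation is that if $a_k=p(k)$ for a polynomial $p$, then the entries of $\LL(A)$ are $p(k)^2-p(k-1)p(k+1)$, which is again a polynomial evaluated at $k$; call it $(\LL p)(k)$. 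So the combinatorial operator $\LL$ on polynomially interpolated sequences is induced by an operator $p\mapsto\LL p$ on $\RR[x]$, namely $(\LL p)(x)=p(x)^2-p(x-1)p(x+1)$. One then has to check two things: (a) $\LL p$ again has a double root at the right place (so that $\LL A\in\A_i$ once we know $\LL A\in\PF$), and (b) $\LL A$ is in fact a $\PF$-sequence. Point (a) is a short direct computation: if $p(s)=p(s+1)=0$ then $(\LL p)(s)=p(s)^2-p(s-1)p(s+1)=0$ and $(\LL p)(s+1)=p(s+1)^2-p(s)p(s+2)=0$, so for $\A_0$ (roots at $0,1$) we get $(\LL p)(0)=(\LL p)(1)=0$, and similarly the shifted conditions for $\A_{-1}$ and $\A_{-2}$ — this explains why exactly those three residue classes appear.

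The substantive part is (b): showing $\LL A\in\PF$, equivalently — via Theorem~\ref{gene} in the rational/polynomial case — that the generating function of $\LL A$ is $w^*(x)/(1-x)^{d^*}$ with $w^*$ real-rooted with nonnegative coefficients. Here I would translate the double-root hypothesis into a convenient normal form for $p$. Since $A\in\PF$ is interpolated by a polynomial with nonnegative values on $\NN$ and a double root, one can write $p(x)=c\,(x-s)(x-s-1)\prod_j(x+\gamma_j)$ with $c>0$ and $\gamma_j\ge 0$ (real-rootedness of $p$ is forced by the $\PF$ condition together with the polynomial interpolation, using the structure of the generating function). Then the heart of the matter is a real-rootedness statement: the polynomial $(\LL p)(x)=p(x)^2-p(x-1)p(x+1)$ is again real-rooted, and moreover its roots lie in the right half-line so that the interpolated sequence is $\PF$. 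I expect this to follow from a ``finite-difference'' or Laguerre–Pólya style argument — writing $p(x\pm1)$ in terms of $p(x)$ and its derivatives and showing that the resulting quadratic-in-$p$ expression factors through operators known to preserve real-rootedness — paralleling the first author's proof in \cite{B} that $\LL$ preserves the class of entire functions with only real nonpositive zeros, but now at the level of polynomials with poles only at $x=1$ in the generating function. The degree bookkeeping (tracking how $d$ changes under $\LL$) is routine once the real-rootedness is in hand.

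For the remaining implications: $(i)\Rightarrow(ii)$ is trivial (restriction to $k\le 2$). The crux is $(ii)\Rightarrow(iii)$, i.e.\ that the hypotheses ``$A,\LL A,\LL^2A$ polynomially interpolated and $A,\LL A\in\PF$'' force a double root of $p$ in one of the three prescribed places. Here I would argue contrapositively: if $p$ has no repeated root among $\{-2,-1,0,1\}$ in the relevant pattern, I would exhibit that $\LL^2 A$ fails to be polynomially interpolated — because passing from a generating function of the form $w(x)/(1-x)^d$ to $\LL$ of it generically introduces a pole off $x=1$ (this is exactly the mechanism behind Example~\ref{not-inf-lc} and Example~\ref{poly-ce}), and only the arithmetic coincidences produced by the double-root condition cause the offending factors to cancel. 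Making this precise — identifying which factor of $w$ or $(1-x)^d$ must cancel and showing the double-root condition is both necessary and sufficient for that cancellation — is the main obstacle, and I expect it to require a careful analysis of the generating function of $\LL A$ in terms of that of $A$ (perhaps an explicit bilinear formula expressing $\sum(a_k^2-a_{k-1}a_{k+1})x^k$ via a Hadamard-type product, using Wagner's \cite{W} framework for polynomial $\PF$-sequences). The closure statement then falls out of the analysis in (b) above.
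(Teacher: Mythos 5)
Your architecture (cycle of implications, closure statement carrying the weight, the short computation that a double root at $\{s,s+1\}$ persists under $p\mapsto p(x)^2-p(x-1)p(x+1)$) matches the paper, but both substantive steps are left as strategies rather than proofs, and the strategies you propose would not go through as stated. For (ii)$\Rightarrow$(iii) you miss the one-line observation that does all the work: by definition the zeroth entry of $\LL(A)$ is $a_0^2=p(0)^2$ (there is no $a_{-1}$), so if $\LL(A)$ is interpolated by $q(x)=p(x)^2-p(x-1)p(x+1)$ then $q(0)=p(0)^2$ forces $p(-1)p(1)=0$; applying the same constraint to $\LL^2(A)$ gives $q(-1)q(1)=-p(-2)p(0)q(1)=0$, and the fact that $\PF$-sequences have no internal zeros finishes the case analysis in a few lines. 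Your proposed contrapositive analysis of pole cancellation in generating functions is not carried out and is considerably harder than what is needed.

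The more serious gap is in the $\PF$-preservation. Your normal form $p(x)=c(x-s)(x-s-1)\prod_j(x+\gamma_j)$ with $\gamma_j\ge 0$ rests on the claim that the $\PF$ condition forces $p$ itself to be real-rooted with the remaining roots nonpositive; this is not the right criterion and is not justified. By Wagner's result (Lemma \ref{wgn}), $\{p(k)\}_{k\ge0}\in\PF$ iff $\EE(p)$ has all its zeros in $[-1,0]$, where $\EE\binom{x}{k}=x^k$ — the relevant root location concerns $\EE(p)$, not $p$. The paper's proof lives entirely on the $\EE$-side: a symmetry $S(x)=-x-1$ commuting with $\EE$, together with a shift, reduces all three classes to $\A_{-1}$; Lemma \ref{algg} (via the diamond product) shows $\EE(\LL p)=x(1+x)T(g)$ where $\EE(p)=x(1+x)g$ and $T$ is an explicit nonlinear differential operator; and $T$ is shown to preserve $[-1,0]$-rootedness by combining the Grace--Walsh--Szeg\H{o} theorem with the Catalan-number identity \eqref{beauty} from \cite{B} and the negativity of the zeros of $\sum_k C_k\binom{n}{2k}x^k(1+x)^{n-2k}$ (a Jacobi polynomial in disguise). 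None of this machinery is identified in your proposal, and the ``Laguerre--P\'olya style'' expansion of $p(x\pm1)$ in derivatives of $p$ that you gesture at does not obviously connect to the $[-1,0]$-rootedness of $\EE(p)$, which is the property that must be propagated.
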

Conjecture \ref{binom-column} trivially holds for $k=0,1$, see \cite{MS}. For $k \geq 2$, $\{ \binom{n+k}{k} \} _{n\ge0} \in \A_{-2}$, and in particular $\{\binom{n+k}{k}\}_{n\ge0}$ is infinitely log-concave, by Theorem \ref{main}. This solves Conjecture \ref{binom-column}.

\begin{example}\label{poly-ce}
Let $A=\{p(k)\}_{k=0}^\infty$. By Lemma \ref{wgn}, a sequence interpolated by a polynomial $p\in\RR[k]$ is a P\'olya frequency sequence if and only if  $\EE(p)$ is $[-1,0]$-rooted.  Thus, for $p(x)=x^3$, $\EE(p)$ is $[-1,0]$-rooted and in addition $p(0)=0$.  But 
\begin{align*}
\EE&((p(k))^2-p(k+1)p(k-1)) \\
&= 72x^4+108x^3+36x^2+1
\end{align*}
has two non-real zeros, so $\LL(A)$ is not a P\'olya frequency sequence. Similarly, $q(x)=(x+1)(x+3)^2$ interpolates a P\'olya frequency sequence, but $\LL(\{q(k)\}_{k=0}^\infty)$ does not, providing an example where $q(-1)=0$ and $q(0)^2-q(1)q(-1) = q(0)^2$. 
\end{example}

Although Example \ref{poly-ce} shows that iterates of $\{k^d\}_{k=0}^\infty$ under  $\LL$  cannot remain in $\PF$, it is still possible that they are infinitely log-concave.

\begin{question}\label{pktod}
Is $\{k^d\}_{k=0}^\infty$ infinitely log-concave for all positive integers $d$? 
\end{question}

Question \ref{pktod} is analogous to a problem for the coefficients of entire functions posed by Craven and Csordas \cite[p.~6, Open Problem (e)]{CC2}. 

\begin{example}
A \emph{labeled poset} is a partially ordered set on $[p]:=\{1,2,\ldots, p\}$, where $p \geq 1$. 
Let $P$ be a labeled poset. A $P$-\emph{partition} is a map $f : P \rightarrow \NN$ such that 
\begin{itemize}
\item if $i <_P j$, then $f(i) \geq f(j)$, and 
\item if $i <_P j$ and $i>j$, then $f(i) > f(j)$,
\end{itemize}
where $<$ is the usual order on the integers. 
The \emph{order polynomial} of a labeled poset $P$ is defined by
$$
\Omega_P(n) = |\{f : P \rightarrow [n] : f \mbox{ is a } P\mbox{-partition}\}|.
$$
The Neggers--Stanley conjecture asserted that $\{\Omega_P(n)\}_{n=0}^\infty$ is a $\PF$-sequence for all labeled posets $P$, see e.g.  \cite{Br}. Counterexamples to the Neggers--Stanley conjecture have been found \cite{B3,Stembridge}, but it remains open in the case of \emph{naturally labeled and graded} and \emph{sign graded} posets (see \cite{Stembridge}). If $P$ is not an antichain, then it is easy to see that either 
$\Omega_P(-1)=\Omega_P(0)=0$, or $\Omega_P(0) = \Omega_P(1)=0$. Hence if $P$ is a not an antichain, and if the Neggers--Stanley conjecture holds for $P$, then $\{\Omega_P(n)\}_{n=0}^\infty$ is infinitely log-concave by Theorem \ref{main}. 
\end{example}

Let $\EE : \RR[x] \rightarrow \RR[x]$ be the linear operator defined by 
$$
\EE \binom x k = x^k, \mbox{ for all } k \in \NN, 
$$
where $\binom x 0 = 1$ and $\binom x k = x(x-1) \cdots (x+k-1)/k!$ if $k \geq 1$. 
\begin{lemma}[{\cite[Proposition 2.3]{W}}]\label{wgn}
Let $A=\{p(k)\}_{k=0}^\infty$ where $p \in \RR[x]$ has positive leading coefficient. Then $A$ is a P\'olya frequency sequence if and only if all the zeros of $\EE(p)$ are real and located in the interval $[-1,0]$. 
\end{lemma}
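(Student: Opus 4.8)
The plan is to translate the statement entirely into the language of generating functions and then invoke the Aissen--Schoenberg--Whitney/Edrei characterization in Theorem \ref{gene}. Writing $d=\deg p$ and expanding $p$ in the binomial basis, $p(x)=\sum_{j=0}^d c_j\binom{x}{j}$, the elementary identity $\sum_{k\ge 0}\binom{k}{j}x^k=x^j/(1-x)^{j+1}$ gives
\[
\sum_{k=0}^\infty p(k)x^k=\sum_{j=0}^d c_j\frac{x^j}{(1-x)^{j+1}}=\frac{1}{1-x}\,\EE(p)\!\left(\frac{x}{1-x}\right),
\]
since by definition $\EE(p)(t)=\sum_j c_j t^j$. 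Thus the whole question is governed by the single polynomial $\EE(p)$, composed with the Möbius substitution $t=x/(1-x)$.

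First I would clear denominators, setting $Q(x)=(1-x)^d\,\EE(p)\big(x/(1-x)\big)$, a polynomial of degree at most $d$, so that the generating function is $Q(x)/(1-x)^{d+1}$. The next step is to understand the zeros of $Q$ in terms of those of $\EE(p)$. The substitution $\phi(x)=x/(1-x)$ is a bijection of $\RR\cup\{\infty\}$ onto itself with inverse $t\mapsto t/(1+t)$; it carries the half-line $(-\infty,0]\cup\{\infty\}$ onto the interval $[-1,0]$, sending $0\mapsto 0$ and $\infty\mapsto -1$. Consequently $\EE(p)$ has all of its zeros in $[-1,0]$ if and only if every finite zero of $Q$ is real and nonpositive, where a zero of $\EE(p)$ at $t=-1$ is accounted for by a drop in the degree of $Q$ (a ``zero at infinity''). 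I would also record that $Q(1)$ equals the leading coefficient of $\EE(p)$, which is $d!$ times the leading coefficient of $p$ and hence positive; this guarantees that $Q(x)$ and $(1-x)^{d+1}$ share no common factor, so $Q/(1-x)^{d+1}$ is already in lowest terms.

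With this in hand the conclusion follows by matching against Theorem \ref{gene}. For a rational generating function the Edrei form forces $\gamma=0$ and finitely many factors, and the denominator $(1-x)^{d+1}$ is exactly $\prod(1-\beta_k x)$ with every $\beta_k=1$. Hence $A$ is a $\PF$-sequence if and only if the numerator $Q$ factors as $c\,x^m\prod_k(1+\alpha_k x)$ with $c>0$ and $\alpha_k>0$, i.e.\ if and only if $Q$ has only real nonpositive zeros and positive leading coefficient. Combining this with the previous paragraph, $A\in\PF$ if and only if $\EE(p)$ is $[-1,0]$-rooted, which is the claim.

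The main obstacle I anticipate is the careful bookkeeping at the two boundary points of the interval under the Möbius map: a factor $t$ of $\EE(p)$ (a zero at $0$) must be matched with the $x^m$ factor coming from $p(0)=p(1)=\cdots=0$, while a factor $(1+t)$ (a zero at $-1$) reduces the degree of $Q$ and so corresponds to the numerator and denominator degrees differing. Verifying that these correspondences are exact --- in particular that no zero of $\EE(p)$ strictly inside or outside $[-1,0]$ can be hidden by cancellation against $(1-x)^{d+1}$ --- is where the argument needs the positivity of the leading coefficient, and is the step I would write out most carefully.
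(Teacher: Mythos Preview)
The paper does not prove Lemma~\ref{wgn}; it simply quotes it from Wagner \cite[Proposition 2.3]{W}. Your argument is correct and is essentially the standard proof: express $\sum_k p(k)x^k$ via the binomial expansion as $(1-x)^{-1}\EE(p)\!\bigl(x/(1-x)\bigr)$, clear denominators to obtain a rational generating function $Q(x)/(1-x)^{d+1}$, and then read off the zero locus condition on $\EE(p)$ from the Aissen--Schoenberg--Whitney/Edrei characterization (Theorem~\ref{gene}) together with the M\"obius correspondence $t\mapsto t/(1+t)$ between $[-1,0]$ and $(-\infty,0]\cup\{\infty\}$.

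Two small points worth tightening in a write-up. First, the positivity of the leading coefficient of $p$ is used in two distinct places: it gives $Q(1)=c_d>0$, which rules out cancellation against $(1-x)^{d+1}$ and hence makes the zero/pole matching with the Edrei form unambiguous; and it ensures the constant $c$ in the Edrei product is positive (equivalently, that the sequence is eventually positive rather than eventually negative). Second, your claim that ``for a rational generating function the Edrei form forces $\gamma=0$ and finitely many factors'' is true but deserves one line of justification (infinitely many $\alpha_k>0$ or $\beta_k>0$ would give infinitely many zeros or poles, and $\gamma>0$ an essential singularity at infinity). With those details filled in, the argument is complete.
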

The \emph{diamond product} is a natural bilinear form associated to $\EE$ defined by 
$$
f\diamond g = \EE(\EE^{-1}(f)\EE^{-1}(g)).
$$
The diamond product may be expressed as 
\begin{equation}\label{dmd}
(f\diamond g)(x)= \sum_{k=0}^\infty \frac {f^{(k)}(x)} {k!} \frac {g^{(k)}(x)} {k!} x^k(x+1)^k,
\end{equation}
see \cite[Theorem 2.7]{W}. 


\begin{lemma}\label{algg}
Let $p \in \RR[x]$ be such that $p(0)=p(-1)=0$, and let $f= \EE(p)$. Then $f(0)=f(-1)=0$ and 
\begin{align}\label{gd}
&\EE\big(p(x)^2-p(x-1)p(x+1)\big) = \nonumber \\
&x(1+x)\sum_{k=0}^\infty \left(\frac {g^{(k)}(x)} {k!} \frac {g^{(k)}(x)} {k!} -\frac {g^{(k-1)}(x)} {(k-1)!} \frac {g^{(k+1)}(x)} {(k+1)!} \right) x^k(1+x)^k,
\end{align}
where $f(x)=x(1+x)g(x)$ and ${g^{(-1)}(x)}/{(-1)!} := 0$.
\end{lemma}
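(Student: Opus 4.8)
The plan is to reduce \eqref{gd} to the diamond product formula \eqref{dmd} after rewriting the shifted polynomials $p(x\pm1)$ in terms of $f$. The statement $f(0)=f(-1)=0$ is the easy part: $\EE$ commutes with evaluation at $0$ and at $-1$, since writing $q\in\RR[x]$ in the Newton basis as $q=\sum_k c_k\binom xk$ gives $\EE(q)=\sum_k c_kx^k$, and $\binom 0k=0^k$ (with $0^0:=1$) together with $\binom{-1}k=(-1)^k$ for all $k\in\NN$ yields $\EE(q)(0)=q(0)$ and $\EE(q)(-1)=q(-1)$. Applying this to $q=p$ gives $f(0)=p(0)=0$ and $f(-1)=p(-1)=0$, so $x(1+x)\mid f$ and $f(x)=x(1+x)g(x)$ for some $g\in\RR[x]$.

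The key step is to express $\EE(p(x\pm1))$ through $f$. By Chu--Vandermonde, $\binom{x+1}n=\binom xn+\binom x{n-1}$ and $\binom{x-1}n=\sum_{k=0}^n(-1)^k\binom x{n-k}$, so applying $\EE$ and writing $f=\sum_n a_nx^n$ one obtains $\EE(p(x+1))=f(x)+\frac{f(x)-f(0)}x$ and $\EE(p(x-1))=\frac{xf(x)+f(-1)}{x+1}$. Since $f(0)=f(-1)=0$ and $f=x(1+x)g$, these collapse to $\EE(p(x+1))=(1+x)^2g(x)$ and $\EE(p(x-1))=x^2g(x)$.

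Now $\EE$ is linear, and the definition of $\diamond$ gives $\EE(r)\diamond\EE(s)=\EE(rs)$ for all $r,s\in\RR[x]$, so
\[
\EE\big(p(x)^2-p(x-1)p(x+1)\big)=(f\diamond f)(x)-\big((x^2g)\diamond((1+x)^2g)\big)(x),
\]
and I would expand both diamond products via \eqref{dmd}. Put $g_k:=g^{(k)}(x)/k!$ (with $g_j:=0$ for $j<0$) and $\alpha:=x(1+x)$, $\beta:=x^2$, $\gamma:=(1+x)^2$. Since $u^{(k)}(x)/k!$ is the coefficient of $y^k$ in $u(x+y)$, the quantities entering \eqref{dmd} for our three products are the coefficients of $y^k$ in $(x+y)(x+1+y)g(x+y)$, $(x+y)^2g(x+y)$ and $(x+1+y)^2g(x+y)$, namely
\[
a_k=\alpha g_k+\alpha'g_{k-1}+g_{k-2},\quad b_k=\beta g_k+\beta'g_{k-1}+g_{k-2},\quad c_k=\gamma g_k+\gamma'g_{k-1}+g_{k-2},
\]
whence $\EE\big(p(x)^2-p(x-1)p(x+1)\big)=\sum_{k\ge0}(a_k^2-b_kc_k)\,x^k(1+x)^k$. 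Multiplying out $a_k^2-b_kc_k$ and using the identities $\alpha^2=\beta\gamma$, $(\alpha')^2=\beta'\gamma'+1$, $2\alpha'=\beta'+\gamma'$, $\beta+\gamma=2\alpha+1$, $\beta\gamma'+\gamma\beta'=2\alpha\alpha'$ (and $\alpha''=\beta''=\gamma''=2$), four of the six terms cancel and one is left with $a_k^2-b_kc_k=g_{k-1}^2-g_kg_{k-2}$. Reindexing $k\mapsto k+1$ (the $k=0$ summand being $0$) then produces exactly the right-hand side of \eqref{gd}.

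The one genuine computation is the six-term expansion of $a_k^2-b_kc_k$ together with the verification of the five polynomial identities above; this is routine, and the essential point is only that $\alpha,\beta,\gamma$ are arranged so that $\alpha^2=\beta\gamma$ and $\beta,\gamma$ are symmetric about $\alpha$. (All sums are finite since $g$ is a polynomial, so there are no convergence issues.) A computation-free alternative would be to exploit the factorization $P^2=QR$, where $P=(x+y)(x+1+y)g(x+y)$, $Q=(x+y)^2g(x+y)$, $R=(x+1+y)^2g(x+y)$: extracting the diagonals of $P(s)P(t)$ and $Q(s)R(t)$ (coefficient extraction with $st$ held fixed) and invoking $P^2=QR$ reproduces the identity $a_k^2-b_kc_k=g_{k-1}^2-g_kg_{k-2}$ after a short residue bookkeeping.
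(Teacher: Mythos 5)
Your proof is correct and follows essentially the same route as the paper: both reduce the left-hand side to $f\diamond f-(x^2g)\diamond\big((x+1)^2g\big)$ via Pascal's rule for the shifted arguments (your explicit formulas for $\EE(p(x\pm1))$ are equivalent to the paper's shift identities for $\EE^{-1}$) and then extract $g_{k-1}^2-g_kg_{k-2}$ from the diamond-product expansion \eqref{dmd} by the same Leibniz cancellation. The only cosmetic difference is that the paper first simplifies to $(xg)\diamond(xg)-g\diamond(x^2g)$ before expanding, whereas you expand the quadratic factors directly.
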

\begin{proof}
Since $\binom \xi k = \xi^k$ for $\xi \in \{-1,0\}$, we have  $p(0)=\EE(p)(0)$ and $p(-1) = \EE(p)(-1)$ for all $p \in \RR[x]$. Let $p \in \RR[x]$ be such that $p(0)=p(-1)=0$, and let $f= \EE(p)=x(1+x)g$. By the definition of the diamond product 
\begin{equation}\label{str}
\EE\big(p(x)^2-p(x-1)p(x+1)\big) = f\diamond f - \EE\Big(\EE^{-1}(f)(x+1)\EE^{-1}(f)(x-1)\Big).
\end{equation}
We claim that for any $h \in \RR[x]$, 
\begin{equation}\label{shift}
\left.\EE^{-1}(xh)\right|_{x=x+1}= \EE^{-1}((x+1)h) \mbox{ and } \left.\EE^{-1}((x+1)h)\right|_{x=x-1} = \EE^{-1}(xh). 
\end{equation}

The second identity of \eqref{shift} follows from the first by a shift of argument, and to prove the first it is enough to prove it for $x^k$, $k \in \NN$:
\begin{align*}
\left. \EE^{-1}(x^{k+1})\right|_{x=x+1} &= \binom {x+1} {k+1}= \binom x {k+1} + \binom x k \\
&= \EE^{-1}(x^{k+1})+ \EE^{-1}(x^k)= \EE^{-1}((x+1)x^k).
\end{align*}
By \eqref{str} and \eqref{shift},
\begin{align*}
\EE\big(p(x)^2-p(x-1)p(x+1)\big) &= (x(x+1)g)\diamond (x(x+1)g) - ((x+1)^2g)\diamond (x^2g) \\
&= (xg) \diamond (xg) - g\diamond (x^2g).
\end{align*}
Now \eqref{gd} follows by using \eqref{dmd} and the identities $(xg)^{(k)} = xg^{(k)}+ kg^{(k-1)}$ and $(x^2g)^{(k)} = x^2g^{(k)}+ 2kxg^{(k-1)} + k(k-1)g^{(k-1)}$, for all $k \geq 0$.  
\end{proof}

%
%
The final ingredient for the proof of Theorem \ref{main} is the powerful Grace--Walsh--Szeg\H{o} Theorem. A {\em circular region} is a proper subset of the complex plane that is bounded by either a circle or a straight line, and is either open or closed. A polynomial is {\em multi-affine} provided that each 
variable occurs at most to the first power.   
\begin{theorem}[Grace--Walsh--Szeg\H{o}, \cite{RS}]\label{GWS}
Let $f \in \CC[z_1, \ldots, z_n]$ be a multi-affine and symmetric polynomial, and let $K$ be a circular region. Assume that either $K$ is convex or that the degree of $f$ is $n$. For any $\zeta_1, \ldots, \zeta_n \in K$ there is a $\zeta \in K$ such that 
$
f(\zeta_1, \ldots, \zeta_n)= f(\zeta, \ldots, \zeta). 
$
\end{theorem}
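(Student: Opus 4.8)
The plan is to pass to the elementary symmetric basis and reduce the coincidence assertion to Grace's apolarity theorem. Since $f$ is multi-affine and symmetric, I would first write $f=\sum_{k=0}^n a_k e_k(z_1,\dots,z_n)$, where $e_k$ is the $k$-th elementary symmetric polynomial, and introduce the diagonal polynomial $\phi(z):=f(z,\dots,z)=\sum_{k=0}^n a_k\binom nk z^k$. Fixing $\zeta_1,\dots,\zeta_n\in K$ and setting $c:=f(\zeta_1,\dots,\zeta_n)$, the conclusion ``there is $\zeta\in K$ with $f(\zeta_1,\dots,\zeta_n)=f(\zeta,\dots,\zeta)$'' is exactly the statement that $\phi(z)-c$ has a zero in $K$. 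Thus the whole theorem reduces to producing such a zero.

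The bridge is an apolarity identity. I would put $P(z):=\prod_{i=1}^n(z-\zeta_i)$, a monic degree-$n$ polynomial all of whose zeros lie in $K$, and recall that two polynomials $A=\sum_k\binom nk\alpha_k z^k$ and $B=\sum_k\binom nk\beta_k z^k$ are \emph{apolar} when $\sum_{k=0}^n(-1)^k\binom nk\alpha_k\beta_{n-k}=0$. Using $P(z)=\sum_j(-1)^{n-j}e_{n-j}(\zeta)z^j$ together with the coefficients of $\phi(z)-c$, a short symmetric-function computation shows that the apolarity invariant of $P$ and $\phi(z)-c$ equals $(-1)^n\big(f(\zeta_1,\dots,\zeta_n)-c\big)$. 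Hence, with the choice $c=f(\zeta_1,\dots,\zeta_n)$, the polynomials $P$ and $\phi(z)-c$ are apolar. This is the one routine calculation I would actually carry out.

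I would then invoke Grace's apolarity theorem: if $A$ and $B$ are apolar and all zeros of $A$ lie in a circular region, then $B$ has a zero there as well --- for an arbitrary circular region when both have full degree, and for a convex circular region when $B$ is degree-deficient. Applied to $A=P$ and $B=\phi(z)-c$, this yields a zero $\zeta\in K$ of $\phi(z)-c$, which is precisely the required coincidence point, since $f(\zeta,\dots,\zeta)=\phi(\zeta)=c=f(\zeta_1,\dots,\zeta_n)$. The two hypotheses of the theorem are exactly what make Grace applicable: the top coefficient of $\phi(z)-c$ is $a_n$, the coefficient of $z_1\cdots z_n$ in $f$, so $\deg(\phi-c)=n$ precisely when $\deg f=n$ (the degree-$n$ alternative), whereas when $\deg f<n$ the polynomial $\phi-c$ is degree-deficient and one instead needs $K$ convex (the other alternative).

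It remains to prove Grace's theorem, which is the genuine content and the main obstacle. I would argue by induction on $n$ from the base case $n=2$. There the apolarity of the pairs $\{\zeta_1,\zeta_2\}$ and $\{w_1,w_2\}$ (the two zeros of the relevant quadratic) amounts to the symmetric relation $(w_1-\zeta_1)(w_2-\zeta_2)+(w_1-\zeta_2)(w_2-\zeta_1)=0$, and I would show that a circular region containing $\zeta_1,\zeta_2$ must contain $w_1$ or $w_2$ by normalizing $K$ through a M\"obius transformation to a half-plane or the disk $|z|\le 1$ --- apolarity of binary quadratics being an $SL_2$-invariant, hence preserved --- and then contradicting an explicit modulus inequality were both $w_i$ excluded. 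The degenerate sub-case, in which the quadratic drops degree and the coincidence value is forced to the midpoint $(\zeta_1+\zeta_2)/2$, is exactly where convexity (or, in the non-convex exterior-of-a-disk case, the full-degree hypothesis via a degree-preserving M\"obius reduction) enters. The step from $n=2$ to general $n$ I would handle by a continuity argument: deform the configuration of zeros and track a continuously chosen coincidence point, using that $K$ is a definite (open or closed) circular region so the tracked point cannot escape. The delicate points --- the $SL_2$-normalization of the apolarity relation, the degenerate midpoint case, and keeping the tracked zero inside $K$ throughout the deformation --- are where essentially all of the difficulty lies.
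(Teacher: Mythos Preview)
The paper does not prove this statement at all: Theorem~\ref{GWS} is quoted from Rahman--Schmeisser \cite{RS} as a classical black box, with no argument supplied. So there is no ``paper's own proof'' to compare against.

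That said, your proposal is the standard route to the Grace--Walsh--Szeg\H{o} coincidence theorem: expand $f$ in the elementary symmetric basis, reduce the coincidence assertion to the statement that the diagonal polynomial $\phi(z)-c$ has a zero in $K$, verify the apolarity of $\phi(z)-c$ with $P(z)=\prod_i(z-\zeta_i)$, and then invoke Grace's apolarity theorem. Your identification of where the two hypotheses (convexity of $K$ versus $\deg f = n$) enter --- namely in whether $\phi(z)-c$ has full degree $n$ --- is exactly right, and the apolarity computation is indeed routine.

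The only place your sketch is thin is the proof of Grace's theorem itself. The $n=2$ base case and M\"obius normalization are fine, but the inductive step ``deform the configuration and track a continuously chosen coincidence point'' is not how the standard arguments go and would need substantial work to make rigorous (one must control which root is being tracked through possible collisions, and argue that it cannot cross $\partial K$). The usual proofs instead either (a) use the polar-derivative recursion $A_n\mapsto nA_n(z)+(\zeta-z)A_n'(z)$ together with Laguerre's theorem to peel off one zero at a time, or (b) deduce Grace directly from Walsh's coincidence theorem by a polarization argument. Either of these would close the gap; the continuity/deformation idea as written is the genuine obstacle in your outline.
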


\begin{proof}[Proof of Theorem \ref{main}]
We prove (i) $\Rightarrow$ (ii) $\Rightarrow$ (iii) $\Rightarrow$ (i). 

The implication (i) $\Rightarrow$ (ii) is immediate. Assume (ii) for $A= \{p(k)\}_{k=0}^\infty$, where $p \in \RR[x]$. Then $\LL(A)$ is interpolated by the polynomial 
$$q(x):= p(x)^2-p(x-1)p(x+1),$$  and hence $p(-1)p(1)=0$. If $p(1)=0$, then $p(0)=0$ since $\PF$--sequences have no internal zeros (by e.g. \eqref{pf-gen}). Thus $A \in \A_0$. 

If $p(-1)=0$, then  $$0=q(-1)q(1)= -p(-2)p(0)q(1),$$
since $\LL^2(A)$ is interpolated by a polynomial. 
If $q(1)=0$, then also $0=q(0)=p(0)^2$ by the argument above, and hence $A \in \A_{-1}$.  Finally if $p(-2)p(0) =0$, then 
$A \in \A_{-1} \cup \A_{-2}$, which proves (iii).

We prove (iii) $\Rightarrow$ (i) and the final statement of the theorem simultaneously.  If $S : \RR[x] \rightarrow \RR[x]$ is the algebra automorphism defined by $S(x)=-x-1$, then $S\circ \EE=\EE \circ S$, see \cite[Lemma 4.2]{B2}. Hence if $r(x):=(-1)^{\deg p}p(-x-1) \in \RR[x]$, then $\{p(k)\}_{k=0}^\infty \in \PF$ if and only if $\{r(k)\}_{k=0}^\infty \in \PF$, by Lemma \ref{wgn}. This provides a bijection between $\A_0$ and $\A_{-2}$. Moreover $\{p(k)\}_{k=0}^\infty  \mapsto \{p(k+1)\}_{k=0}^\infty$ is a bijection between $\A_{-1}$ and $\A_0$. Hence the proof is reduced to proving $\LL : \A_{-1} \rightarrow \A_{-1}$. By Lemma \ref{algg} and Lemma \ref{wgn} it remains to prove that the nonlinear operator $T : \RR[x] \rightarrow \RR[x]$ defined by 
$$
T(g)= \sum_{k=0}^\infty \left(\frac {g^{(k)}(x)} {k!} \frac {g^{(k)}(x)} {k!} -\frac {g^{(k-1)}(x)} {(k-1)!} \frac {g^{(k+1)}(x)} {(k+1)!} \right) x^k(1+x)^k
$$
preserves the property of having all zeros in the interval $[-1,0]$.

Suppose $g(x) = \prod_{i=1}^n (x+\theta_i)$, where $0 \leq \theta_i \leq 1$. Let $y \in \CC \setminus [-1,0]$. We shall prove that $T(g)(y) \neq 0$. We claim that  we may choose $\xi \in \CC$ such that $\xi^2= y(y+1)$ and $\Re (\xi/y) >0$. Indeed if $\zeta$ is a square root of $y(y+1)$ and $\Re(\zeta/y) = 0$, then 
$\zeta^2/y^2 = 1+1/y$ is a negative real number and thus $y \in (-1,0)$. Hence we may choose $\xi = \pm \zeta$. Next we claim that $\Re(\xi/(y+\theta)) >0$ for all 
$\theta \in [0,1]$. Indeed if $\Re(\xi/(y+\theta)) \leq 0$ for some $0\leq \theta \leq 1$, then $\Re(\xi/(y+\theta')) = 0$ for some $0\leq \theta' \leq 1$ (since $\Re (\xi/y) >0$). Hence 
$$
\frac {\xi^2}{(y+\theta')^2}=  \frac {y(y+1)}{(y+\theta')^2}  
$$
is a negative real number. However if $\lambda>0$, then the zeros of  
$$
h(x)= x(x+1)+\lambda (x+\theta')^2
$$
are real and in the interval $[-1,0]$, because $h(t) > 0$ if $t \in \RR\setminus [-1,0]$ and $h(-\theta') \leq 0$.  This contradiction ($y \in [-1,0]$) proves the claim. 

Let $\zz= (z_1,\ldots, z_n)$, where $z_i= \xi/(y+\theta_i)$ for $1 \leq i \leq n$. Thus $\Re(z_i) >0$ for all $1\leq i \leq n$. Let $e_k(\xx)$ is the $k$th elementary symmetric function in the variables $\xx =(x_1,\ldots, x_n)$. Then 
\begin{equation}\label{symm-fn-ident}
\begin{aligned}
\sum_{k=0}^n g(y)e_k(\zz)t^k &= g(y) \prod_{i=1}^n \left(1+ \frac {\xi t}{y+\theta_i}\right)\\
&= \prod_{i=1}^n(y+\theta_i + \xi t) \\
&= \sum_{k=0}^n \frac{g^{(k)}(y)}{k!} \xi^k t^k,
\end{aligned}
\end{equation}
and therefore,
\[
\xi^k\frac{g^{(k)}(y)}{k!} = g(y)e_k({\bf z}).
\]

The identity
\begin{equation}\label{beauty}
\sum_{k=0}^n (e_k(\xx)^2-e_{k-1}(\xx)e_{k+1}(\xx))= e_n(\xx)\sum_{k=0}^{\lfloor n/2 \rfloor}  C_k e_{n-2k}\left(\xx+\frac 1 \xx\right),
\end{equation}
where $C_k=\binom {2k} k /(k+1)$ is a \emph{Catalan number} and $1/\xx=(1/x_1,\ldots,1/x_n)$, was proved in \cite{B}. Hence 
$$
T(g)(y)= g(y) \xi^n \sum_{k=0}^{\lfloor n/2 \rfloor}  C_k e_{n-2k}\left(\zz+\frac 1 \zz\right).
$$
For the sake of contradiction suppose $T(g)(y)=0$. Since $\Re(z_i+1/z_i) >0$ for all $1\leq i \leq n$, the Grace--Walsh--Szeg\H{o} Theorem provides a number $\eta \in \CC$, with $\Re(\eta)>0$, such that 
$$
0=\sum_{k=0}^{\lfloor n/2 \rfloor} C_k e_{n-2k}\left(\eta, \ldots, \eta\right)= \sum_{k=0}^{\lfloor n/2 \rfloor} C_k\binom n {2k} \eta^{n-2k}=: \eta^n q_n\left(\frac 1 {\eta^2}\right).  
$$
Since $\Re(\eta)>0$, we have $1/\eta^2 \in \CC \setminus \{ x \in \RR: x\leq  0\}$. Hence, the desired contradiction follows if we can prove that all the zeros of $q_n(x)$ are real and negative. This follows from the identity 
\begin{eqnarray*}
\sum_{k=0}^{\lfloor n/2 \rfloor} C_k \binom n {2k}x^{k}(1+x)^{n-2k}&=&
\sum_{k=0}^n \frac 1 {n+1} \binom {n+1} k \binom {n+1} {k+1} x^k\\ 
&=& \frac 1 {n+1} (1-x)^nP_n^{(1,1)}\left(\frac {1+x}{1-x} \right), 
\end{eqnarray*}
where $\{P_n^{(1,1)}(x)\}_n$ are \emph{Jacobi polynomials}, see \cite[p.~254]{Ra}.  The zeros of the Jacobi polynomials $\{P_n^{(1,1)}(x)\}_n$ are located in the interval $(-1,1)$. Note that the first identity in the equation above follows 
immediately from \eqref{beauty}. 
\end{proof}

\section{Non-linear differential operators acting on polynomial spaces}\label{extensions}

Here we generalize the proof of Conjecture \ref{McNamara-Sagan} to more general statements involving non-linear differential operators, paralleling results in \cite{B}.  For our extensions we require the following theorem.  

\begin{theorem}[{\cite[Theorem 2.1]{B}}]\label{magic}
Let $\{\mu_j\}_{j=0}^\infty$ be a sequence of complex numbers, and $e_k({\bf x})$ be the $k$-th elementary symmetric function in the variables $x_1,\dots,x_n$.  Then
\begin{equation}\label{more-beauty}
\sum_{i\le j} \mu_{j-i}e_i({\bf x})e_j({\bf x}) = e_n({\bf x})\sum_{k=0}^n \gamma_k e_{n-k}\left({\bf x }+ \frac{1}{{\bf x}}\right)
\end{equation}
where $1/{\bf x}=(1/x_1,\dots,1/x_n)$ and
\begin{equation}\label{gamma}
\gamma_k = \sum_{j=0}^{\lfloor k/2\rfloor} \binom{k}{j} \mu_{k-2j}.
\end{equation}
\end{theorem}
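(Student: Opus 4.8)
The plan is to prove \eqref{more-beauty} by exploiting linearity in the sequence $\{\mu_j\}$ to reduce to a single coefficient extraction in an auxiliary variable.

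First I would observe that for fixed $n$ only the entries $\mu_0,\dots,\mu_n$ enter either side of \eqref{more-beauty}: on the left, $j-i$ ranges over $\{0,\dots,n\}$, since the terms with $i$ or $j$ outside $\{0,\dots,n\}$ vanish ($e_r(\xx)=0$ for $r<0$ or $r>n$, and $e_0(\xx)=1$), while \eqref{gamma} writes each of $\gamma_0,\dots,\gamma_n$ as a fixed linear combination of $\mu_0,\dots,\mu_n$. Hence both sides of \eqref{more-beauty} are linear in $(\mu_0,\dots,\mu_n)\in\CC^{n+1}$, so it is enough to verify the identity for the basis choices $\mu_j=\delta_{j,m}$, $0\le m\le n$. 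For such a choice the left-hand side of \eqref{more-beauty} collapses to $\sum_{i\ge 0}e_i(\xx)e_{i+m}(\xx)$, and \eqref{gamma} yields $\gamma_k=\binom{k}{(k-m)/2}$ when $k\ge m$ and $k\equiv m\pmod 2$, and $\gamma_k=0$ otherwise.

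So the crux is the identity
\[
\sum_{i\ge 0}e_i(\xx)e_{i+m}(\xx)\;=\;e_n(\xx)\sum_{\substack{k\ge m\\ k\equiv m\ (2)}}\binom{k}{(k-m)/2}\,e_{n-k}\!\left(\xx+\frac{1}{\xx}\right).
\]
To prove it I would work with Laurent polynomials in $x_1,\dots,x_n$ and a new variable $z$. Since $\sum_i e_i(\xx)z^i=\prod_{i=1}^n(1+x_iz)$ and $\sum_j e_j(\xx)z^{-j}=\prod_{i=1}^n(1+x_iz^{-1})$, the left-hand side above is the coefficient of $z^{-m}$ in $\prod_{i=1}^n(1+x_iz)(1+x_iz^{-1})$. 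The one genuinely nontrivial input is the factorization
\[
(1+x_iz)(1+x_iz^{-1})=x_i\bigl((z+z^{-1})+(x_i+x_i^{-1})\bigr),
\]
from which, writing $w=z+z^{-1}$ and using $e_n(\xx)=\prod_{i=1}^n x_i$,
\[
\prod_{i=1}^n(1+x_iz)(1+x_iz^{-1})=e_n(\xx)\prod_{i=1}^n\bigl(w+(x_i+x_i^{-1})\bigr)=e_n(\xx)\sum_{k=0}^n e_k\!\left(\xx+\frac{1}{\xx}\right)w^{n-k}.
\]
Extracting the coefficient of $z^{-m}$ and using $[z^{-m}](z+z^{-1})^{n-k}=\binom{n-k}{(n-k+m)/2}$ (valid when $n-k\ge m$ and $n-k\equiv m\pmod 2$, and $0$ otherwise), then re-indexing $k\mapsto n-k$ and applying $\binom{k}{(k+m)/2}=\binom{k}{(k-m)/2}$, reproduces exactly the claimed right-hand side; together with the value of $\gamma_k$ computed above this gives \eqref{more-beauty} for $\mu_j=\delta_{j,m}$, hence in general.

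I do not anticipate a serious obstacle: once the reduction to $\mu_j=\delta_{j,m}$ is made, the rest is a finite formal computation, and the factorization step, while the key idea, is elementary. The only points requiring care are the bookkeeping of parity and range conditions in the coefficient extraction — interpreting the binomial coefficients as $0$ outside their stated range, consistently with the convention $e_r(\xx)=0$ for $r\notin\{0,\dots,n\}$ — and confirming that $\sum_{j=0}^{\lfloor k/2\rfloor}\binom{k}{j}\delta_{k-2j,m}$ really equals $\binom{k}{(k-m)/2}$ under that convention. As an alternative one could avoid the case split by carrying a generating variable $y$ with $\sum_m\mu_m y^m$ through the same $z$-extraction from the outset, but the $\delta$-reduction is cleaner to write down.
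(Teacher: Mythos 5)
Your argument is correct. Note that the paper itself gives no proof of this statement --- it is imported verbatim as \cite[Theorem 2.1]{B} --- but your reduction by linearity to $\mu_j=\delta_{j,m}$ followed by extraction of the coefficient of $z^{-m}$ from $\prod_{i=1}^n(1+x_iz)(1+x_iz^{-1})=e_n(\xx)\prod_{i=1}^n\bigl(z+z^{-1}+x_i+x_i^{-1}\bigr)$ is essentially the same symmetrization device used in the cited source, and your parity/range bookkeeping for $\gamma_k$ checks out (e.g.\ for $\mu=\{1,0,-1,0,\dots\}$ it recovers the Catalan identity \eqref{beauty}).
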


Note the sum on the left hand side of \eqref{more-beauty} runs over the available indices of the elementary symmetric functions, $0\le i\le j \le n$.

  
For the remainder of the section let $\PPP_n^+$ denote the set of all real polynomials of degree at most $n$, whose zeros are all real and non-positive, and let $\PPP^+=\cup_{n=0}^\infty \PPP_n^+$. By convention we also let $0 \in \PPP_n^+$ for all $n \ge 0$. We say that a polynomial $f \in \CC[x]$ is stable if either 
$f \equiv 0$ or 
$$
f(z) = 0 \ \ \ \mbox{ implies } \ \ \ \Im \ z \leq 0.
$$

The following characterizations follow from a straightforward generalization of the proof of Theorem \ref{main}.  Below, we frequently use the algebraic relations
\begin{equation}\label{even}
\sum_{i\le j} \mu_{j-i}e_je_ic^{i+j} = \sum_{k=0}^\infty\left(\sum_{j\ge 0} \alpha_je_{k+j}e_{k-j} \right) c^{2k}
\end{equation}
when $\mu=\{\alpha_0,0,\alpha_1,0,\alpha_2,0,\dots\}$, and
\begin{equation}\label{odd}
\sum_{i\le j} \mu_{j-i}e_je_ic^{i+j} = \sum_{k=0}^\infty\left(\sum_{j\ge 0} \alpha_je_{k+j+1}e_{k-j} \right) c^{2k+1}
\end{equation}
when $\mu=\{0,\alpha_0,0,\alpha_1,0,\alpha_2,\dots\}$.

\begin{theorem}\label{nonlinear-real-roots1}

Let $n \in \NN$, and let $a,b,c,r$ and $s$ be real numbers with $a < b$, $c >0$ and $r,s \geq 0$.  Let further $\{\alpha_k\}_{k=0}^\infty$ be a sequence of real numbers.  The following are equivalent:

\begin{enumerate}\itemsep 6pt
\item \label{Qn}
$ \displaystyle
Q_n^\alpha(x) := \sum_{k=0}^{\lfloor n/2 \rfloor} \left( \sum_{j=0}^k \frac{\alpha_j}{(k+j)!(k-j)!} \right) \frac{x^k}{(n-2k)!} \in \PPP_n^+, 
$

\item \label{Ya}
$ \displaystyle
Y_n^\alpha(x) := \sum_{k=0}^n \left(\sum_{j=0}^k \alpha_j \binom{n}{k+j}\binom{n}{k-j}\right) x^k \in \PPP_n^+,
$

\item\label{Tabc}
$ \displaystyle
U_\alpha^{(a,b,c)}(g) := \sum_{k=0}^\infty \left(\sum_{j\ge 0} \alpha_j\frac{g^{(k+j)}(x)}{(k+j)!}\frac{g^{(k-j)}(x)}{(k-j)!} \right) c^k(x-a)^k(x-b)^k
$\\
has zeros only in $[a,b]$ for all polynomials $g$ of degree at most $n$ whose zeros all lie in the interval $[a,b]$,
\item \label{Tabc-neg}
$U_\alpha^{(a,b,-c)}(g)$ has zeros only in  $(-\infty,a]\cup [b,\infty)$, for all polynomials $g$ of degree at most $n$ whose zeros lie in $(-\infty,a]$.
\item \label{Tabc2-neg}
$U_\alpha^{(a,b,-c)}(g)$ has zeros only in  $(-\infty,a]\cup [b,\infty)$, for all polynomials $g$ of degree at most $n$ whose zeros lie in $[b,\infty)$. 
\item \label{Ede} 
$ \displaystyle
E_\alpha^{(r,s)}(g):=\sum_{k=0}^\infty \left(\sum_{j\ge 0} \alpha_j\frac{g^{(k+j)}(x)}{(k+j)!}\frac{g^{(k-j)}(x)}{(k-j)!} \right) (rx-s)^{k}
$
has only real zeros 
for all $g\in\PPP_n^+$.
\end{enumerate}
Moreover if any of the conditions above hold and $r=0$, then $E_\alpha^{(0,s)}(g)$ is stable whenever $g$ is stable and of degree at most $n$. 
\end{theorem}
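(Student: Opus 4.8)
The plan is to carry out the proof of Theorem~\ref{main} at the level of generality provided by Theorem~\ref{magic}, so that the Jacobi-polynomial computation closing that proof is replaced by the abstract hypothesis~\eqref{Qn}. Throughout take $\mu=\{\alpha_0,0,\alpha_1,0,\dots\}$, so that $\mu_{2i}=\alpha_i$ and $\mu_{2i+1}=0$; then the coefficients~\eqref{gamma} vanish in odd degree and $\gamma_{2\ell}=\sum_{i=0}^\ell\binom{2\ell}{\ell-i}\alpha_i$, whence $\binom{n}{2\ell}\gamma_{2\ell}=\frac{n!}{(n-2\ell)!}\sum_{i=0}^\ell\frac{\alpha_i}{(\ell+i)!(\ell-i)!}$. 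I first record one master computation covering \eqref{Tabc}--\eqref{Ede} simultaneously. Let $g(x)=\prod_{i=1}^n(x-\rho_i)$ and let $y$ satisfy $g(y)\neq 0$. Writing $\xi^2$ for the per-index factor of the operator evaluated at $y$ --- namely $c(y-a)(y-b)$, $-c(y-a)(y-b)$, or $ry-s$ for $U_\alpha^{(a,b,c)}$, $U_\alpha^{(a,b,-c)}$, $E_\alpha^{(r,s)}$ respectively --- and setting $z_i=\xi/(y-\rho_i)$, the identity~\eqref{symm-fn-ident} gives $\xi^kg^{(k)}(y)/k!=g(y)e_k(\zz)$. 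Substituting this into the defining sums and invoking \eqref{even} together with \eqref{more-beauty} collapses each operator at $y$ to the common closed form
\begin{equation*}
g(y)^2\,\xi^{n}\sum_{\ell=0}^{\lfloor n/2\rfloor}\gamma_{2\ell}\,e_{n-2\ell}\!\left(\zz+\tfrac{1}{\zz}\right).
\end{equation*}

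Specializing the same manipulation to $x_1=\dots=x_n=t$ identifies the two scalar polynomials of \eqref{Qn}--\eqref{Ya}: with $u=t^2$ one finds $Y_n^\alpha(u)=\sum_\ell\gamma_{2\ell}\binom{n}{2\ell}u^\ell(1+u)^{n-2\ell}$ and $\sum_\ell\gamma_{2\ell}\binom{n}{2\ell}w^\ell=n!\,Q_n^\alpha(w)$, whence $Y_n^\alpha(u)=n!\,(1+u)^n\,Q_n^\alpha\!\big(u/(1+u)^2\big)$. From this last relation I would deduce \eqref{Qn}$\Leftrightarrow$\eqref{Ya}: the quadratic substitution $v=u/(1+u)^2$ sends $(-\infty,0]$ two-to-one onto $(-\infty,0]$, and for each real $v\le 0$ its two preimages are again real and $\le 0$ (the discriminant $1-4v\ge1$ is positive), so $Q_n^\alpha$ has all zeros in $(-\infty,0]$ exactly when $Y_n^\alpha$ does; a bookkeeping of degrees, leading coefficients and the endpoints $v=0,\tfrac14$ makes this an exact equivalence.

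It remains to show each of \eqref{Tabc}--\eqref{Ede} is equivalent to \eqref{Qn}. For the forward direction, given $g$ with zeros in the prescribed set and a point $y$ in the complementary forbidden region $F=\CC\setminus T$ (where $T$ is $[a,b]$, $(-\infty,a]\cup[b,\infty)$, or $\RR$ respectively), I choose the branch of $\xi$ making $\Re(z_i)>0$ for all $i$: for the bounded-interval cases this is the square-root argument of Theorem~\ref{main} (a negative real value of $\xi^2/(y-\rho_i)^2$ is excluded by the positivity of an auxiliary quadratic, the analogue of $h(x)=x(x+1)+\lambda(x+\theta')^2$), and for \eqref{Ede} it is the observation that for nonreal $y$ the reciprocals $1/(y-\rho_i)$ lie in a common half-plane that a single choice of $\arg\xi$ rotates into $\{\Re>0\}$. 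Since $\Re(z_i)>0$ forces $\Re(z_i+1/z_i)>0$, the Grace--Walsh--Szeg\H{o} Theorem applied to the symmetric multi-affine polynomial $\sum_\ell\gamma_{2\ell}e_{n-2\ell}$ over the convex region $\{\Re>0\}$ yields $\eta$ with $\Re\eta>0$ and $\sum_\ell\gamma_{2\ell}\binom{n}{2\ell}\eta^{n-2\ell}=0$, i.e. $Q_n^\alpha(1/\eta^2)=0$ with $1/\eta^2\in\CC\setminus(-\infty,0]$, contradicting \eqref{Qn}. For the converse I specialize $g=(x-\rho)^n$, reducing the closed form to $(y-\rho)^{3n}Y_n^\alpha(\phi(y))$ with $\phi(y)=\xi^2/(y-\rho)^2$ a rational map of degree two; since $\phi$ is surjective, maps $\RR$ into a half-line $(-\infty,M]$, and $F$ contains every nonreal point (and, in the bounded-interval cases, a real interval whose image fills $(0,M]$), one gets $\phi(F)\supseteq\CC\setminus(-\infty,0]$ --- taking $\rho\to-\infty$ to force $M\to0$ when $T=\RR$ --- so any zero of $Y_n^\alpha$ off $(-\infty,0]$ would place a zero of the operator in $F$, and hence \eqref{Ya} must hold. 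Finally, for the Moreover clause set $r=0$, so $\xi^2=-s$ is a fixed nonpositive constant; when $g$ is stable and $\Im y>0$ every $y-\rho_i$ lies in the open upper half-plane, the single branch $\xi=i\sqrt{s}$ again gives $\Re(z_i)>0$, and the same Grace--Walsh--Szeg\H{o} reduction to \eqref{Qn} yields $E_\alpha^{(0,s)}(g)(y)\neq0$, i.e. stability.

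The step I expect to be the main obstacle is the per-configuration geometry that makes the two halves of the argument uniform. On the forward side one must verify, for each forbidden region and for both signs of $c$, that a single branch of $\xi$ places all $z_i$ --- and hence all $z_i+1/z_i$ --- in one open half-plane, together with the Grace--Walsh--Szeg\H{o} hypotheses and the degenerate cases (zeros at the endpoints $a,b$, or $y$ real). On the converse side the delicate point is the exact surjectivity bookkeeping $\phi(F)\supseteq\CC\setminus(-\infty,0]$, including the limit $\rho\to-\infty$ for \eqref{Ede} and the boundary behaviour of the two-to-one substitution $v=u/(1+u)^2$ in \eqref{Qn}$\Leftrightarrow$\eqref{Ya}.
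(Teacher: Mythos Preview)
Your plan is essentially the paper's proof: the master identity via Theorem~\ref{magic}, the half-plane choice of $\xi$, and the Grace--Walsh--Szeg\H{o} reduction to $Q_n^\alpha$ are exactly what the paper does, and you correctly anticipate that the geometric verification of $\Re(z_i)>0$ is the only real work in each case. Two small corrections: in the collapsed form the prefactor is $g(y)\,\xi^n$, not $g(y)^2\,\xi^n$ (since $e_n(\zz)=\xi^n/g(y)$ cancels one $g(y)$), and specializing $g=(x-\rho)^n$ gives $(y-\rho)^{2n}Y_n^\alpha(\phi(y))$, not $(y-\rho)^{3n}$.

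There are two places where you diverge from the paper. First, the paper does not prove \eqref{Qn}$\Leftrightarrow$\eqref{Ya} directly but cites \cite[Theorem~4.2]{B}; your route through the identity $Y_n^\alpha(u)=n!(1+u)^n Q_n^\alpha\!\big(u/(1+u)^2\big)$ is a nice self-contained alternative, and the two-to-one substitution argument is correct once you observe that for every $v\notin(-\infty,0]$ both preimages under $u\mapsto u/(1+u)^2$ lie outside $(-\infty,0]$. Second, for the converse directions the paper simply takes $\rho$ to be an \emph{endpoint} ($g=(x-b)^n$ for \eqref{Tabc}, $g=(x-a)^n$ for \eqref{Tabc-neg}), so that $\phi(y)=c(y-a)/(y-b)$ becomes a M\"obius map and the surjectivity is immediate; your scheme with a general $\rho$ and a limiting argument $\rho\to-\infty$ for \eqref{Ede} also works but is heavier than necessary. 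In the endpoint version one should still note that the single value $\phi=\infty$ (i.e.\ the missed point $w_0=c$) does not obstruct the conclusion, since it corresponds to a drop in degree rather than to a zero outside the target set.
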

\begin{proof}
The equivalence \eqref{Ya}$\Leftrightarrow$\eqref{Qn} follows from \cite[Theorem 4.2]{B}. To prove the remainder of the theorem we will show 
\eqref{Qn}$\Rightarrow$\eqref{Tabc}$\Rightarrow$\eqref{Ya}, 
\eqref{Qn}$\Rightarrow$\eqref{Tabc-neg}$\Rightarrow$\eqref{Ya}, 
\eqref{Qn}$\Rightarrow$\eqref{Tabc2-neg}$\Rightarrow$\eqref{Ya} and \eqref{Qn}$\Rightarrow$\eqref{Ede}$\Rightarrow$\eqref{Ya}.

\eqref{Qn}$\Rightarrow$\eqref{Tabc} is proved as follows. Assume \eqref{Qn}, and 
suppose $g(x) = \prod_{i=1}^n (x-\theta_i)$.   We shall prove that $U_\alpha^{(a,b,c)}(g)(y) \neq 0$ for $y \in \CC \setminus [a,b]$. We claim that we may choose $\xi \in \CC$ such that $\xi^2= c(y-a)(y-b)$ and $\Re (\xi/(y-a)) >0$. Indeed if $\zeta$ is a square root of $c(y-a)(y-b)$ and $\Re(\zeta/(y-a)) = 0$, then 
$\zeta^2/(y-a)^2 = c(y-b)/(y-a)$ is a negative real number and thus $y \in (a,b)$. Hence we may choose $\xi = \pm \zeta$. Next we claim that $\Re(\xi/(y-\theta)) >0$ for all 
$\theta \in [a,b]$. Indeed if $\Re(\xi/(y-\theta)) \leq 0$ for some $a\leq \theta \leq b$, then $\Re(\xi/(y-\theta')) = 0$ for some $a\leq \theta' \leq b$ (since $\Re (\xi/(y-a)) >0$). Hence 
$$
\frac {\xi^2}{(y-\theta')^2}= \frac {c(y-a)(y-b)}{(y-\theta')^2}  
$$
is a negative real number. This implies $y \in [a,b]$, which is a contradiction, and proves the claim. 

We now suppose that $U_\alpha^{(a,b,c)}(g)(y) = 0$, for some $y\in \CC \setminus [a,b]$, and obtain a contradiction.  Let $\mu=\{\alpha_0,0,\alpha_1,0,\alpha_2,0,\dots\}$ and
let $\zz= (z_1,\ldots, z_n)$, where $z_i= \xi/(y-\theta_i)$ for $1 \leq i \leq n$. Thus $\Re(z_i) >0$ for all $1\leq i \leq n$. Let $e_k(\xx)$ be the $k$th elementary symmetric function in the variables $\xx =(x_1,\ldots, x_n)$. 
Then with the relation (cf. \ref{symm-fn-ident})
\[
\xi^k \frac{g^{(k)}(y)}{k!} = g(y)e_k({\bf z})  
\]
and Theorem \ref{magic},
\begin{align}
U_\alpha^{(a,b,c)}(g)(y) &=  g^2(y)\sum_{j\le k} \mu_{k-j} e_k({\bf z})e_j({\bf z}) \nonumber\\
				   &= g^2(y) e_n({\bf z}) \sum_{k=0}^n \gamma_k e_{n-k}({\bf z} + 1/{\bf z}) \nonumber\\
				   &= g^2(y) e_n({\bf z}) \sum_{k=0}^{\lfloor n/2 \rfloor} \gamma_{2k} e_{n-2k}({\bf z} + 1/{\bf z}) =0 \label{a-last}
\end{align}
Applying Theorem \ref{GWS}, there is a number $\eta$, with $\Re(\eta)>0$, such that
\[
\sum_{k=0}^{\lfloor n/2 \rfloor} \gamma_{2k} e_{n-2k}(\eta,\dots,\eta)= n!\eta^n Q_n^\alpha(\eta^{-2}) =0.
\] 
Since $\eta^{-2}$ is not a negative real number, and since $Q_n^\alpha \in\PPP_n^+$  we obtain the desired contradiction. This proves \eqref{Tabc}. 

\eqref{Tabc}$\Rightarrow$\eqref{Ya} is shown as follows.
If $U_\alpha^{(a,b,c)}(g)$ has zeros only in the interval $[a,b]$ for all $g$ of degree $n$ with zeros only in $[a,b]$, then 
\begin{align}
U_\alpha^{(a,b,c)}((x-b)^n) &= (x-b)^{2n} \sum_{k=0}^n \left(\sum_{j=0}^k \alpha_j \binom{n}{k+j}\binom{n}{k-j}\right) c^k \left(\frac{x-a}{x-b}\right)^k\nonumber\\
&= (x-b)^{2n} Y_n^\alpha\left(c\left(\frac{x-a}{x-b}\right)\right),\nonumber
\end{align}
so the zeros of $Y_n^\alpha$ must be real and non-positive.

We now sketch the proof of \eqref{Qn}$\Rightarrow$\eqref{Tabc-neg} which is almost the same as the proof for \eqref{Qn}$\Rightarrow$\eqref{Tabc}.  One must establish the claims
\begin{itemize}
\item[(I)] For $y\in\CC\setminus((-\infty,a]\cup[b,\infty))$, there is a $\xi$ such that $\xi^2=-c(y-a)(y-b)$ and $\Re(\xi/(y-a))>0$, and
\item[(II)] For $\xi$ chosen as in claim (i), $\Re(\xi/(y-\theta))>0$ for all $\theta\in(-\infty,a]$.
\end{itemize}
These follow by the same reasoning used to show \eqref{Qn}$\Rightarrow$\eqref{Tabc}.
The proof of \eqref{Qn}$\Rightarrow$\eqref{Tabc-neg} then continues as in the case for \eqref{Qn}$\Rightarrow$\eqref{Tabc}. 

The implication \eqref{Tabc-neg}$\Rightarrow$\eqref{Ya} is proved in a way similar to \eqref{Tabc}$\Rightarrow$\eqref{Ya}: 
\begin{center}
$U_\alpha^{(a,b,-c)}((x-a)^n)$ has zeros only in $(-\infty,a]\cup[b,\infty)$
\end{center} 
implies that $Y_n^\alpha$ as given by \eqref{Ya}, has only real negative zeros. 

The proofs of \eqref{Qn}$\Rightarrow$\eqref{Tabc2-neg}$\Rightarrow$\eqref{Ya} and \eqref{Qn}$\Rightarrow$\eqref{Ede}$\Rightarrow$\eqref{Ya}  follow similarly. 

For the final statement, let $g(x)= \prod_{j=1}^n (x - \zeta_j)$, where $\Im \ \zeta_j \leq 0$ for all $1\leq j \leq n$, be a stable polynomial. For $y \in \CC$ with $\Im \ y >0$, let $z_j := i\sqrt{s}/(y-\zeta_j)$. Then $\Re \ z_j >0$, and the proof follows just as above. 
\end{proof}

The \emph{Laguerre--P\'olya class}, $\LP$, of entire functions consists of all entire functions that are limits, uniformly on compact subsets of $\CC$,  of real polynomials with only real zeros. A function $\phi$ is in $\LP$ if and only if it can be expressed in the form 
$$
\phi(z)= C z^n e^{-az^2 +bz}\prod_{j=0}^\infty (1+\rho_jz)e^{-\rho_j z},
$$
where $n \in \NN$, $a,b,c \in \RR$, $a \geq 0$, and $\{\rho_j\}_{j=0}^\infty \subset \RR$ satisfies $\sum_{j=0}^\infty \rho_j^2< \infty$, see \cite[Chapter VIII]{Le}. 

$\LP^+$ consists of those functions in the Laguerre--P\'olya class that have non-negative Taylor coefficients. A function $\phi$ is in $\LP^+$ if and only it can be expressed as 
$$
\phi(z) = Cz^M e^{az}\prod_{j=0}^\infty (1+\rho_jz), 
$$
where $a, C\geq 0, M \in \NN$ and $\sum_{j=0}^\infty \rho_j < \infty$, see \cite[Chapter VIII]{Le}.

The following unbounded degree version of Theorem~\ref{nonlinear-real-roots1} is proved exactly as \cite[Theorem 5.7]{B}. 

\begin{theorem}\label{transa}

Let $a,b,c,r$ and $s$ be real numbers with $a < b$, $c >0$ and $r,s \geq 0$.  Let further $\{\alpha_k\}_{k=0}^\infty$ be a sequence of real numbers.  The following are equivalent:

\begin{enumerate}\itemsep 6pt
\item 
$ \displaystyle
Q_\infty^\alpha(x) := \sum_{k=0}^{\infty} \left( \sum_{j=0}^k \frac{\alpha_j}{(k+j)!(k-j)!} \right) {x^k} \in \LP^+, 
$

\item
$ 
U_\alpha^{(a,b,c)}(g)
$
has zeros only in $[a,b]$ for all polynomials $g$ whose zeros all lie in the interval $[a,b]$,
\item 
$U_\alpha^{(a,b,-c)}(g)$ has zeros only in  $(-\infty,a]\cup [b,\infty)$, for all polynomials $g$  whose zeros lie in $(-\infty,a]$.
\item \label{Tabc-neg2}
$U_\alpha^{(a,b,-c)}(g)$ has zeros only in  $(-\infty,a]\cup [b,\infty)$, for all polynomials $g$  whose zeros lie in $[b,\infty)$. 
\item 
$ 
E_\alpha^{(r,s)}(g)
$
has only real zeros 
for all $g\in\PPP^+$.
\end{enumerate}
Moreover if any of the conditions above hold and $r=0$, then $E_\alpha^{(0,s)}(g)$ is stable whenever $g$ is stable. 
\end{theorem}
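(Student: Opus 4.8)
The plan is to mimic the proof of \cite[Theorem 5.7]{B} line by line: the bounded‑degree Theorem~\ref{nonlinear-real-roots1} already reduces every assertion to statements about polynomials, and the only genuinely new ingredient is a limiting argument to pass to unbounded degree, based on the classical description of $\LP^+$ as the local‑uniform closure of $\bigcup_{n\ge 0}\PPP_n^+$ together with Hurwitz's theorem.

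\textbf{Reduction to one equivalence.} First I would record a purely formal reduction. Every polynomial $g$ occurring in conditions (ii)--(v), and every stable polynomial in the final clause, has some finite degree $n$; applying the relevant hypothesis for that $n$ and, conversely, restricting attention to polynomials of degree $\le n$, one sees that condition (ii) (resp.\ (iii), (iv), (v)) of Theorem~\ref{transa} is \emph{verbatim} the assertion that condition \eqref{Tabc} (resp.\ \eqref{Tabc-neg}, \eqref{Tabc2-neg}, \eqref{Ede}) of Theorem~\ref{nonlinear-real-roots1} holds for every $n\in\NN$, and similarly for the stability clause. Since for each fixed $n$ the conditions \eqref{Qn}--\eqref{Ede} of Theorem~\ref{nonlinear-real-roots1} are all equivalent, the theorem follows once one proves the single equivalence
\[
Q_\infty^\alpha\in\LP^+\iff Q_n^\alpha\in\PPP_n^+ \ \text{ for every }n\in\NN,
\]
which identifies condition (i) of Theorem~\ref{transa} with ``\eqref{Qn} holds for every $n$''; the stability clause then follows from the last sentence of Theorem~\ref{nonlinear-real-roots1} applied for each $n$. (If $\alpha\equiv 0$ everything is trivial; otherwise the numbers $c_k:=\sum_{j=0}^k\alpha_j/((k+j)!\,(k-j)!)$ are not all zero and, being coefficients of members of $\PPP_n^+$, share a common sign, so we may assume $c_k\ge 0$.)

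\textbf{Proof of the equivalence.} For ``$\Leftarrow$'' I would rescale: $P_n(x):=n!\,Q_n^\alpha(x/n^2)$ has the same zeros as $Q_n^\alpha$ up to a positive dilation, so $P_n\in\PPP_n^+$, while its coefficient of $x^k$ is $c_k\prod_{i=0}^{2k-1}(1-i/n)$, which increases to $c_k$. Newton's inequalities applied to $P_n\in\PPP_n^+$ bound these coefficients uniformly in $n$ (after dividing out any common power of $x$), which forces $\sum_k c_k x^k$ to be entire and $P_n\to Q_\infty^\alpha$ locally uniformly on $\CC$; by Hurwitz's theorem $Q_\infty^\alpha$ then has only real zeros, and these are non‑positive since its Taylor coefficients are non‑negative, so $Q_\infty^\alpha\in\LP^+$. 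For ``$\Rightarrow$'' I would write $Q_\infty^\alpha\in\LP^+$ as a local‑uniform limit of polynomials $\phi_m\in\PPP^+$ (truncate the canonical product) and use the bounded‑degree fact, established in \cite{B}, that the transform $\sum_k b_k x^k\mapsto\sum_k b_k x^k/(n-2k)!$ carries $\PPP^+$ into $\PPP_n^+$. Since the coefficients of $\phi_m$ converge to $c_k$, those of the images converge to $c_k/(n-2k)!$, so Hurwitz's theorem gives $Q_n^\alpha=\sum_k c_k x^k/(n-2k)!\in\PPP_n^+$, as required.

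\textbf{Main obstacle.} The step I expect to demand real care is the uniform growth estimate in the ``$\Leftarrow$'' direction: extracting from ``$Q_n^\alpha\in\PPP_n^+$ for all $n$'' a bound on the $c_k$ (via Newton's inequalities for $P_n$) strong enough that $Q_\infty^\alpha$ is entire and $P_n\to Q_\infty^\alpha$ locally uniformly, so that Hurwitz's theorem is applicable; the only subtlety there is handling the lowest‑order term of $Q_n^\alpha$ correctly. Everything else transfers word for word from the proof of \cite[Theorem 5.7]{B}, the unbounded‑degree hypotheses entering solely through the characterizations of $\LP^+$ recalled above.
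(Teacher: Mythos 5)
Your proposal is correct and matches the paper's approach: the paper gives no argument beyond "proved exactly as \cite[Theorem 5.7]{B}", and your reduction to the single equivalence $Q_\infty^\alpha\in\LP^+\Leftrightarrow Q_n^\alpha\in\PPP_n^+$ for all $n$ (via Theorem \ref{nonlinear-real-roots1} applied degree by degree), settled by the scaling/Newton/Hurwitz argument in one direction and the multiplier-sequence property of $\{1/(n-2k)!\}_k$ (equivalently, the real-rootedness of $\sum_k x^k/(k!(n-2k)!)$, a rescaled Hermite polynomial) in the other, is exactly the standard degree-unbounding argument that reference carries out.
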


One may also apply the operators above to functions in the Laguerre--P\'olya class. Using the methods in \cite{B}, the following transcendental extension of Theorem \ref{transa} follows.  Let $\LP_\CC$ be the class of entire functions that are limits, uniformly on compact subsets of $\CC$,  of stable polynomials. If $I \subseteq \RR$, let $\LP(I)$ be the class of real entire functions that are limits, uniformly on compact subsets of $\CC$,  of real polynomials with zeros only in $I$. 

\begin{theorem}\label{transtransa}

Let $a,b,c,r$ and $s$ be real numbers with $a < b$, $c >0$ and $r,s \geq 0$.  Let further $\{\alpha_k\}_{k=0}^\infty$ be a sequence of real numbers.  The following are equivalent:

\begin{enumerate}\itemsep 6pt
\item 
$ 
Q_\infty^\alpha(x) \in \LP^+, 
$
\item 
$U_\alpha^{(a,b,-c)}(g)$ is a function in $\LP((-\infty,a]\cup [b,\infty))$, for all functions $g \in \LP((-\infty,a])$.
\item 
$U_\alpha^{(a,b,-c)}(g)$  is a function in $\LP((-\infty,a]\cup [b,\infty))$, for all functions $g \in \LP([b,\infty))$.
\item 
$ 
E_\alpha^{(r,s)}(g)
$
is a function in $\LP$ 
for all $g\in\LP^+$.
\end{enumerate}
Moreover if any of the conditions above hold and $r=0$, then $E_\alpha^{(0,s)}(g) \in \LP_\CC$ whenever $g \in \LP_\CC$. 
\end{theorem}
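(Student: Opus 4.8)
The plan is to deduce the entire-function statement of Theorem~\ref{transtransa} from its polynomial counterpart, Theorem~\ref{transa}, by an approximation argument, anchoring all four equivalences on condition~(1). Condition~(1) is a statement about the sequence $\{\alpha_k\}$ alone and is literally identical in the two theorems, so it is the natural hub. The three reverse implications are immediate: each of~(2),~(3),~(4) asserts a zero-locus property of $U_\alpha^{(a,b,-c)}(g)$ or $E_\alpha^{(r,s)}(g)$ for every $g$ in a Laguerre--P\'olya class, and every polynomial with zeros in the prescribed set belongs to that class. Restricting~(2),~(3),~(4) to polynomial $g$ therefore recovers verbatim conditions~(3),~(4),~(5) of Theorem~\ref{transa}, each of which is equivalent to~(1). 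Hence the substance lies entirely in the forward implications $(1)\Rightarrow(2),(3),(4)$.

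For $(1)\Rightarrow(2)$ I would argue as follows. Let $g\in\LP((-\infty,a])$; by definition $g=\lim_m g_m$ uniformly on compact subsets of $\CC$, where each $g_m$ is a real polynomial with zeros only in $(-\infty,a]$. Assuming~(1), Theorem~\ref{transa} guarantees that every $U_\alpha^{(a,b,-c)}(g_m)$ is a real polynomial with zeros confined to $(-\infty,a]\cup[b,\infty)$. If one shows that $U_\alpha^{(a,b,-c)}(g_m)\to U_\alpha^{(a,b,-c)}(g)$ uniformly on compact sets, then $U_\alpha^{(a,b,-c)}(g)$ is by definition a member of $\LP((-\infty,a]\cup[b,\infty))$---equivalently, Hurwitz's theorem forces its zeros into that closed set. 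The implications $(1)\Rightarrow(3)$ and $(1)\Rightarrow(4)$ are obtained identically, approximating $g\in\LP([b,\infty))$ and $g\in\LP^+$ by the corresponding polynomials and invoking the matching conclusions of Theorem~\ref{transa}.

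The \emph{moreover} clause fits the same template. Given $g\in\LP_\CC$, write $g=\lim_m g_m$ uniformly on compact sets with each $g_m$ a stable polynomial. The stability part of Theorem~\ref{transa} (with $r=0$) makes each $E_\alpha^{(0,s)}(g_m)$ stable; the same continuity statement together with the definition of $\LP_\CC$ (or Hurwitz applied on the open upper half-plane) then places $E_\alpha^{(0,s)}(g)$ in $\LP_\CC$.

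The main obstacle is precisely the continuity invoked above. The operators $U_\alpha^{(a,b,\pm c)}$ and $E_\alpha^{(r,s)}$ are defined by series in the derivatives of $g$ that are genuinely infinite once $g$ is transcendental, so one must verify both that these series converge to entire functions on the relevant classes and that they depend continuously, uniformly on compact sets, on $g$. This rests on growth and Taylor-coefficient estimates for functions in the Laguerre--P\'olya class---bounds on the quantities $g^{(k)}(x)/k!$ that make the tails $\sum_{k\ge K}$ uniformly small over an approximating family $\{g_m\}$ on each compact set. These estimates are exactly the ``methods in \cite{B}'': the argument runs as in \cite[Theorem 5.7]{B}, already cited for Theorem~\ref{transa}, and I would import that machinery rather than reprove it. I expect this convergence and continuity bookkeeping, rather than the zero-locus logic, to be where the real work lies.
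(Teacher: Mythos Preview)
Your proposal is correct and matches the paper's approach: the paper gives no proof beyond the sentence ``Using the methods in \cite{B}, the following transcendental extension of Theorem~\ref{transa} follows,'' which is precisely the approximation-plus-continuity scheme you describe. Your identification of the hub condition~(1), the immediate reverse implications by restriction to polynomials, and the forward implications via Hurwitz together with the convergence estimates from \cite[Theorem~5.7]{B} is exactly what the authors are invoking.
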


\begin{remark}
As in \cite{B} one may also prove results analogous to Theorems \ref{nonlinear-real-roots1}, \ref{transa}, and \ref{transtransa}, for operators of the form 
$$
\sum_{k=0}^\infty \left(\sum_{j\ge 0} \alpha_j\frac{g^{(k+1+j)}(x)}{(k+1+j)!}\frac{g^{(k-j)}(x)}{(k-j)!} \right) P(x)^k,
$$
where $P(x)=c(x-a)(x-b)$ or $P(x)= rx-s$ with real constants $a,b,c,r,$ and $s$. 
\end{remark}

\begin{example}
By Theorem \ref{transtransa}, $E_\alpha^{(0,-s)}$ maps $\LP_\CC$ to $\LP_\CC$, and hence also $\LP$ to $\LP$, for all $s \geq 0$. 
The \emph{Hermite polynomials} \cite[p.~189]{Ra} are given by the Rodrigues--type formula
\[
H_n(x) = (-1)^ne^{x^2}\frac{d^n}{dx^n}e^{-x^2}.
\]  With $g=e^{-x^2}$, $\alpha=\{1,-1,0,0,\dots\}$, and our observation about $E_\alpha^{(0,-s)}$,
\begin{equation}\label{hermet}
E_\alpha^{(0,-s)}(g) = e^{-2x^2}\sum_{k=0}^\infty \left( \frac {H_{k}(x)} {k!} \frac {H_{k}(x)} {k!} -\frac {H_{k-1}(x)} {(k-1)!} \frac {H_{k+1}(x)} {(k+1)!}\right) (-s)^{k}\in\LP
\end{equation}
for all $s\ge 0$. We can transform this expression with the identity 
\[
(H_k(x))^2-H_{k-1}(x)H_{k+1}(x) = (k-1)!\sum_{j=0}^{k-1} \frac{2^{k-j}}{j!} H^2_{j}(x),
\]
which follows from the Christoffel--Darboux formula \cite[p.~154]{Ra}, and the Appell property of the Hermite polynomials, $H'_n(x)=2nH_{n-1}(x)$ \cite[p.~188]{Ra}.  A resulting statement equivalent to \eqref{hermet} is
\[
e^{-2x^2}\sum_{k=0}^\infty \left(\sum_{j=1}^{k}\frac{2^{j-1}}{j!} H^2_j(x)\right) \frac{(-s)^{k}}{(k+1)!}\in\LP,
\]
for all $s\ge 0$.
\end{example}

\end{document}